    \crefname{prop}{Proposition}{Propositions}   
    \crefname{const}{Construction}{Constructions}
    \crefname{subsection}{Subsection}{Subsections}   
    \definecolor{amethyst}{rgb}{0.46, 0.2, 0.88}
\newtheorem{thm}{Theorem}[section]
\newtheorem{cor}[thm]{Corollary}
\newtheorem{prop}[thm]{Proposition}
\newtheorem{lem}[thm]{Lemma}
\theoremstyle{definition}
\newtheorem{defn}[thm]{Definition}
\newtheorem{exmp}[thm]{Example}
\newtheorem{example}[thm]{Example}
\theoremstyle{remark}
\newtheorem{rem}[thm]{Remark}
\let\c@equation\c@thm
\numberwithin{equation}{section}   
\newcommand*\bigcdot{\mathpalette\bigcdot@{.5}}
\newcommand*\bigcdot@[2]{\mathbin{%
    \vcenter{\hbox{\scalebox{#2}{$\m@th#1\bullet$}}}
}}%
\def\subsection{\@startsection{subsection}{3}%
    \z@{.5\linespacing\@plus.7\linespacing}{.1\linespacing}%
    {\bfseries}}
\newcommand{\C}{\mathbb{C}}
\newcommand{\R}{\mathbb{R}}
\newcommand{\Z}{\mathbb{Z}}
\DeclareMathOperator{\mprec}{mp}   
\newcommand{\Lstar}{L_{*}}
\theoremstyle{plain}
\newtheorem{theorem}[thm]{Theorem}
\newtheorem{lemma}[thm]{Lemma}
\newtheorem{corollary}[thm]{Corollary}
\newtheorem{proposition}[thm]{Proposition}
\newcommand{\deftobe}{\coloneqq}   
\newcommand{\betodef}{\eqqcolon}   
\newcommand{\divides}{\mathrel{\vert}}
\newcommand{\maps}{\colon}
\newcommand{\sst}{\,:\,}   
\newcommand*{\Q}{\mathbb{Q}}
\providecommand*{\C}{\mathbb{C}} 
\newcommand{\fd}{\Delta}   
\DeclareMathOperator{\lcm}{lcm}
\DeclareMathOperator{\den}{den}   
\DeclareMathOperator{\vertx}{vert}   
\DeclareMathOperator{\ehr}{ehr}   
\DeclareMathOperator{\mqp}{mqp}   
\renewcommand{\subset}{\subseteq}
\renewcommand*{\pmod}[1]{\:(\operatorname{mod} {#1})}
\renewcommand{\mod}{\pmod}
\DeclarePairedDelimiter{\braces}{\lbrace}{\rbrace}
\DeclarePairedDelimiter{\floors}{\lfloor}{\rfloor}
\DeclarePairedDelimiter{\verts}{\lvert}{\rvert}
\newcommand*{\biggparens}[1]{\biggl\lparen #1 \biggr\rparen}
\newcommand*{\card}{\verts*}
\newcommand*{\floor}{\floors*}
\newcommand*{\setof}[1]{\braces*{#1}}
\newcommand{\defing}[1]{\textup{\textbf{#1}}}   
\newcommand{\unisubn}{\unichar{"2099}}   
\newcommand{\uniF}{\unichar{"1D439}}     
\newcommand{\unin}{\unichar{"1D45B}}     
\newcommand{\uniG}{\unichar{"1D43A}}     
\title{Quasiperiods of Magic Labeling Quasipolynomials}
\author[Bayer]{Margaret Bayer\textsuperscript{*}}
\address{\textsuperscript{*} University of Kansas, Lawrence, KS, USA}
\author[Burcroff]{Amanda Burcroff\textsuperscript{\textbullet}}
\address{\textsuperscript{\textbullet} Harvard University, Cambridge, MA, USA}
\author[McAllister]{Tyrrell B. McAllister\textsuperscript{\textparagraph}}
\address{\textsuperscript{\textparagraph} University of Wyoming, Laramie, WY, USA}
\author[Pai]{Leilani Pai\textsuperscript{\S}}
\address{\textsuperscript{\S} University of Nebraska-Lincoln, Lincoln, NE, USA}
\subjclass[2020]{Primary 05C78; Secondary 52B20, 05C30}
\begin{document}

\begin{abstract}
    A \defing{magic labeling} of a graph is a labeling of the edges by
    nonnegative integers such that the label sum over the edges
    incident to every vertex is the same.  This common label sum is
    known as the \defing{index}.  We count magic labelings by maximum
    edge label, rather than index, using an Ehrhart-theoretic
    approach.  In contrast to Stanley's 1973 work showing that the
    function counting magic labelings with bounded index is a
    quasipolynomial with quasiperiod $2$, we show by construction that
    the minimum quasiperiod of the quasipolynomial counting magic
    labelings with bounded maximum label can be arbitrarily large,
    even for planar bipartite graphs.  Unfortunately, this rules out a
    certain Ehrhart-theoretic approach to proving Hartsfield and
    Ringel's Antimagic Graph Conjecture.  However, we show that this
    quasipolynomial is in fact a polynomial for any bipartite graph
    with matching preclusion number at most $1$, which includes any
    bipartite graph with a leaf.
\end{abstract}

\maketitle

\section{Introduction}\label{sec: intro}

A \defing{magic labeling} of a graph is a function assigning to each
edge of the graph a nonnegative integer so that the sum of the labels
on the edges containing each vertex is the same.  This common sum at
each vertex is called the \defing{index} of the magic labeling.  The
study of magic labelings of graphs was initiated by a problem proposed
by Ji\v{r}\'{i} Sedl\'{a}\v{c}ek in 1966~\cite{MR0172259}, and the
first paper devoted to the topic was by B.~M.~Stewart~\cite{Stewart}.
Interest in this topic grew significantly after the publication of
Richard Stanley's paper, ``Linear homogeneous Diophantine equations
and magic labelings of graphs''~\cite{Sta1973}.  In that paper,
Stanley showed that the number of magic labelings with index~$k$ is a
quasipolynomial function of $k$ with minimum quasiperiod at most~$2$.

In this paper, we count the number of magic labelings of a graph by
the maximum label used, rather than by the index.  The function that
counts the number of magic labelings with maximum label at most $k$ is
again a quasipolynomial.  Our main result (\autoref{thm: main theorem}
below) is that, in contrast to Stanley's result, the minimum
quasiperiod of this quasipolynomial is unbounded.  We show this by
constructing, for each positive integer $n$, a planar bipartite graph
for which the minimum quasiperiod is~$n$.

This result is motivated by work of Matthias Beck and Maryam Farahmand
in \cite{BecFar2017} on \emph{anti}magic labelings, which are
labelings of the edges of a graph by distinct labels in
$\{1,2,\dots,|E|\}$ such that the sums of the labels at each vertex
are \emph{distinct}.  A famous open problem from 1990 posed in
\cite{pearls} is whether all connected graphs (except for~$K_2$) have
an antimagic labeling.  See \cite[Chapter~6]{Gallian} for a
comprehensive summary of progress on this problem.  Beck and Farahmand
pursued a strategy for proving a weakened form of this conjecture,
namely that for some fixed $s \geq 1$, every connected graph (except
for~$K_{2}$) has a labeling using only labels in $\setof{1, \dotsc,
s\card{E}}$ (allowing repeated labels) where the sums of the labels at
each vertex are distinct.  As shown in~\cite{BecFar2017}, this claim
would follow if it were known that the function that counts the number
of magic labelings with maximum label~$k$ has minimum quasiperiod at
most~$s$ for every graph.  Unfortunately, the present paper shows that
no such bound on the minimum quasiperiod exists for general graphs, so
the approach explored in~\cite{BecFar2017} cannot succeed without
significant modification.

While this minimum quasiperiod is unbounded in general, we show that
the minimum quasiperiod is bounded for certain classes of graphs (see
\autoref{sec: small quasiperiod}).  In particular, we show that, if a
graph contains an edge that attains the maximum label in every
index-$2$ magic labeling, then this quasipolynomial has minimum
quasiperiod at most $2$.  Furthermore, if the graph is additionally
bipartite, then this quasipolynomial is in fact a polynomial.  This
suggests that Beck and Farahmand's approach may be adapted for special
families of graphs.  However, the classes of graphs that we consider
are not closed under certain operations that they use in
\cite{BecFar2017}.

\section{Preliminaries}\label{sec: prelims}

Throughout this paper we let $G = (V,E)$ be an undirected graph
without multiple edges (but possibly with loops).

An (\defing{edge}) \defing{labeling} of $G$ is a function $E \to
\Z_{\ge 0}$.  We view the labelings of~$G$ as the points of the
integer lattice $\Z^E$ that are in the positive orthant
$\smash{\R_{\ge 0}^E}$ of the vector space $\R^E$.  For each $L \in
\R^E$, write
\begin{equation*}
    s_{L}(v) \deftobe \sum_{e\in E,\, e \ni v} L(e)
\end{equation*}
for the sum of the labels of the edges incident to $v$.  A labeling
$L$ is \defing{magic} if the value of $s_{L}(v)$ is the same for each
vertex $v$ of $G$.  Thus, the magic labelings are the lattice points
in the polyhedral cone $C_{G} \subset \R^E$ defined by
\begin{equation*}
    C_{G}
    \deftobe
    \setof{L \in \R_{\ge 0}^E \sst \text{$s_{L}(v) = s_{L}(w)$ for
    all $v, w \in V$}}.
\end{equation*}
See \cite[Chapter~5]{Gallian} for a survey of results concerning magic
labelings and related notions.

The primary object of study in this paper is the rational polytope
$P_{G} \subset \R^E$ defined by
\begin{equation*}
    P_{G} \deftobe C_{G} \cap [0,1]^{E}.
\end{equation*}
For $k \in \Z_{\ge 0}$, a labeling $L$ is a \defing{$k$-labeling} if
$L(e) \le k$ for all edges $e \in E$.  Thus, the magic $k$-labelings
of $G$ are precisely the integer-lattice points in the
$k$\textsuperscript{th} dilate ${k P_{G} \deftobe \setof{k L \sst L
\in P_{G}}}$ of $P_{G}$.  We are in particular interested in the
function
\begin{equation*}
    M_{G}(k) \deftobe \card{k P_{G} \cap \Z^{E}}
\end{equation*}
that counts the number of magic $k$-labelings of $G$.  

A better-studied counting function in the context of magic labelings
is the function $S_{G}(k)$ that counts the magic labelings of $G$ with
index exactly~$k$, where the \defing{index} of a magic labeling~$L$ of
$G$ is the common value of $s_{L}(v)$ for all $v \in V$.  This
function corresponds to the polytope
\begin{equation*}
    Q_{G} 
    \deftobe%
    \setof{%
        L \in \R_{\ge 0}^E
        \sst
        \text{$s_{L}(v) = 1$ for all $v \in V$}
    },%
\end{equation*}
since the number of magic labelings of $G$ with index $k$ is
\begin{equation*}
    S_{G}(k) = \card{k Q_{G} \cap \Z^E}.
\end{equation*}
Note that, since labelings are nonnegative, a magic labeling with
index~$k$ is in particular a magic $k$-labeling.  The corresponding
statement regarding the polytopes is that $Q_G \subset P_G$.  However,
the dimension of $Q_G$ is always strictly less than that of $P_G$.

From the point of view of Ehrhart theory, the definitions of $M_{G}$
and of $S_{G}$ immediately imply that they are the \emph{Ehrhart
quasipolynomials} of the polytopes~$P_{G}$ and $Q_{G}$, respectively.
We briefly explain this connection, but we refer the reader to
\cite{BecRob2007} for a thorough introduction to Ehrhart theory,
including the properties of Ehrhart quasipolynomials stated here.

A function $F \maps \Z \to \C$ (or $F \maps \Z_{\ge 0} \to \C$) is a
\defing{quasipolynomial} of \defing{degree}~$d$ if there exist an
integer $s \in \Z_{\ge 1}$ and polynomials $\phi_{1}, \dotsc, \phi_{s}
\in \C[x]$, called the \defing{constituents} of $F$, such that $d =
\max\setof{\deg(\phi_{1}), \dotsc, \deg(\phi_{s})}$ and ${F(t) =
\phi_{r}(t)}$ whenever $t \equiv r \mod{s}$.  Such a positive
integer~$s$ is a \defing{quasiperiod} of~$F$.  The quasiperiods of $F$
are precisely the positive integer multiples of the \emph{minimum}
quasiperiod of $F$, which we denote $\mqp(F)$.  Alternatively, $F$ is
a quasipolynomial of degree~$d$ if and only if $F(t) = \sum_{i=0}^{d}
c_{i}(t) \, t^{i}$ for some sequence $c_{0}, \dotsc, c_{d}$ of
periodic \defing{coefficient functions} $\Z \to \C$ with $c_{d}$ not
identically zero.  Writing $s_{i}$ for the minimum period of $c_{i}$
for $0 \le i \le d$, we then have that $\mqp(F) = \lcm\setof{s_{0},
\dotsc, s_{d}}$.

Now let $P \subset \R^{N}$ be a $d$-dimensional \defing{rational}
polytope, meaning that $\vertx(P) \subset \Q^{N}$, where $\vertx(P)$
denotes the set of vertices of~$P$.  By a celebrated theorem of
Eug{\`e}ne Ehrhart, the function $\ehr_{P}(t) \deftobe \card{tP \cap
\Z^{N}}$ for $t \in\Z_{\ge 1}$ is a degree-$d$ quasipolynomial called
the \defing{Ehrhart quasipolynomial} of~$P$.  Moreover, the
\emph{minimum} quasi\-period of $\ehr_{P}$ divides the
\defing{denominator} of $P$, which is defined to be $\den(P) \deftobe
\min\setof{t \in \Z_{\ge 1} \sst \vertx(tP) \subset \Z^{N}}$.  Thus,
an upper bound on the denominator of $P$ is also an upper bound on
$\mqp(\ehr_P)$.

As discussed in the introduction, Beck and Farahmand showed in
\cite{BecFar2017} that a proof of an upper bound on $\mqp(M_{G})$
independent of $G$ would suffice to prove a weakened version of an
open problem regarding antimagic graph labelings.  However, we find
below that no such upper bound on $\mqp(M_{G})$ exists.

In order to compute the denominator and minimum quasiperiod of a
rational polytope $P \subset \R^N$, we study a related semigroup in
$\R^{N+1}$.  The \defing{semigroup of~$P$}, denoted by $\Phi(P)$, has
elements $(L, k)$ where $k$ is a nonnegative integer and $L$ is a
lattice point in $kP$.  That is, the semigroup $\Phi(P)$ consists of
the integer points in the \defing{homogenized cone over $P$}, which is
the cone generated by $P \times \setof{1}$ in $\R^{N+1}$.  The binary
operation in the semigroup is entry-wise addition.

\begin{defn}
    A nonzero element $a$ of an additive semigroup~$\Phi$ is
    \defing{completely fundamental} if, for all $b, c \in \Phi$, if $b
    + c = ma$ for some positive integer $m$, then $b$ and $c$ are both
    nonnegative integer multiples of $a$.
\end{defn}

We remark that, when $\Phi = \Phi(P)$ is the semigroup of a rational
polytope~$P$, then the completely fundamental elements of $\Phi$ are
precisely the points $(d_v v, d_v)$ where $v$ is a vertex of $P$ and
$d_v \deftobe \den(v)$.  Thus, the denominator of~$P$ can be expressed
in terms of the completely fundamental elements of $\Phi(P)$.

\begin{cor}\label{cor: quasiperiod ub}
    The denominator of a polytope $P$ is equal to the least common
    multiple of the final coordinates of the completely fundamental
    elements of $\Phi(P)$.
\end{cor}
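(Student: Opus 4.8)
The plan is to deduce the statement directly from the preceding remark, which identifies the completely fundamental elements of $\Phi(P)$ with the points $(d_v v, d_v)$ for $v \in \vertx(P)$ and $d_v \deftobe \den(v)$, together with the definition of $\den(P)$. First I would record the elementary fact that $\den(P) = \lcm\setof{\den(v) \sst v \in \vertx(P)}$. Indeed, for $t \in \Z_{\ge 1}$ the dilation $x \mapsto tx$ is an affine isomorphism, so $\vertx(tP) = t\cdot\vertx(P)$; and $tv \in \Z^N$ precisely when $\den(v) \divides t$. Hence $\vertx(tP) \subset \Z^N$ if and only if $\den(v) \divides t$ for every vertex $v$, i.e.\ if and only if $\lcm\setof{\den(v) \sst v \in \vertx(P)} \divides t$, and taking the least such $t$ yields the claimed formula.

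Given this reduction, the corollary is immediate. By the preceding remark the completely fundamental elements of $\Phi(P)$ are exactly the $(d_v v, d_v)$ with $v$ ranging over $\vertx(P)$, and the final coordinate of $(d_v v, d_v)$ is $d_v = \den(v)$. Therefore the least common multiple of the final coordinates of the completely fundamental elements of $\Phi(P)$ is $\lcm\setof{\den(v) \sst v \in \vertx(P)}$, which by the previous paragraph equals $\den(P)$.

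The main (and essentially only) obstacle is justifying the preceding remark, so for completeness I would sketch why it holds. Since $P$ is a nonempty bounded polytope, the homogenized cone over $P$ lies in the halfspace $\setof{x_{N+1} \ge 0}$ and meets the hyperplane $\setof{x_{N+1} = 0}$ only in the origin; in particular it is pointed, and in a pointed cone the completely fundamental elements of the associated semigroup are precisely the primitive lattice points on the extreme rays. The extreme rays of the homogenized cone over $P$ are in bijection with the vertices of $P$, the ray through $v$ being $\R_{\ge 0}(v,1)$, and the primitive lattice point on that ray is obtained by clearing denominators, namely $(\den(v)\,v, \den(v))$. This is the standard correspondence between completely fundamental elements and vertices underlying the theory of denominators of rational polytopes (cf.\ \cite{BecRob2007}); the one place where a little care is needed is in checking that the definition of completely fundamental forces primitivity (so that, e.g., $2(\den(v)v,\den(v))$ is excluded), after which everything else is bookkeeping.
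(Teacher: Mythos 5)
Your argument is correct and follows exactly the intended route: the paper states this corollary immediately after the remark identifying the completely fundamental elements of $\Phi(P)$ with the points $(d_v v, d_v)$ for $v \in \vertx(P)$, leaving the combination with $\den(P) = \lcm\setof{\den(v) \sst v \in \vertx(P)}$ (which follows from $\vertx(tP) = t\,\vertx(P)$) implicit. Your write-up makes that bookkeeping explicit and also sketches the justification of the remark itself, but it is the same proof.
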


When $P \in \setof{P_G,\, Q_G}$, we say that a magic labeling $L$ is a
\defing{completely fundamental (magic) labeling of $\Phi(P)$} if
$(L,k)$ is a completely fundamental element of $\Phi(P)$ for some $k
\in \Z_{\geq 0}$.  An important subtlety is that which labelings are
``completely fundamental" depends upon whether one is considering the
polytope $P_G$ of all magic labelings with labels $\le k$ or the
polytope $Q_G$ of magic labelings of index exactly $k$.

\begin{example}\label{exmp: polytopes and cones}
    Let~$G$ be the graph with one node $v$ and two loops at~$v$.  Then
    we can identify~$\R^{E}$ with $\R^{2}$.  Under this
    identification, $P_{G}$ is the unit square $[0,1]^{2}$, and
    $Q_{G}$ is the line segment with endpoints $(1,0)$ and $(0,1)$.
    In \autoref{fig: cone example}, the semigroup $\Phi(P_{G})$ is the
    set of integer-lattice points (not shown) in the blue cone, and
    $\Phi(Q_{G})$ is the set of such points in the red cone.  The
    completely fundamental elements of $\Phi(P_{G})$ are $(0,0,1)$,
    $(0,1,1)$, $(1,0,1)$, and $(1,1,1)$, corresponding to the four
    vertices of $P_G$.  The completely fundamental elements of
    $\Phi(Q_{G})$ are $(0,1,1)$ and $(1,0,1)$, corresponding to the
    two vertices of $Q_G$.
\end{example}

\begin{figure}
\begingroup%
  \makeatletter%
  \providecommand\color[2][]{%
    \errmessage{(Inkscape) Color is used for the text in Inkscape, but the package 'color.sty' is not loaded}%
    \renewcommand\color[2][]{}%
  }%
  \providecommand\transparent[1]{%
    \errmessage{(Inkscape) Transparency is used (non-zero) for the text in Inkscape, but the package 'transparent.sty' is not loaded}%
    \renewcommand\transparent[1]{}%
  }%
  \providecommand\rotatebox[2]{#2}%
  \newcommand*\fsize{\dimexpr\f@size pt\relax}%
  \newcommand*\lineheight[1]{\fontsize{\fsize}{#1\fsize}\selectfont}%
  \ifx\svgwidth\undefined%
    \setlength{\unitlength}{207.48675489bp}%
    \ifx\svgscale\undefined%
      \relax%
    \else%
      \setlength{\unitlength}{\unitlength * \real{\svgscale}}%
    \fi%
  \else%
    \setlength{\unitlength}{\svgwidth}%
  \fi%
  \global\let\svgwidth\undefined%
  \global\let\svgscale\undefined%
  \makeatother%
  \begin{picture}(1,0.95557169)%
    \lineheight{1}%
    \setlength\tabcolsep{0pt}%
    \put(0,0){\includegraphics[width=\unitlength,page=1]{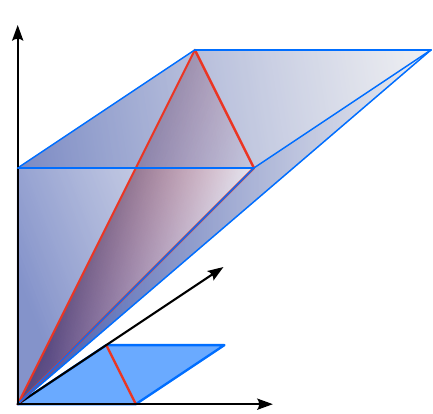}}%
    \put(0.04094058,0.9221923){\color[rgb]{0,0,0}\makebox(0,0)[t]{\lineheight{1.25}\smash{\begin{tabular}[t]{c}\textbf{$z$}\end{tabular}}}}%
    \put(0.66938429,0.00764587){\color[rgb]{0,0,0}\makebox(0,0)[t]{\lineheight{1.25}\smash{\begin{tabular}[t]{c}\textbf{$x$}\end{tabular}}}}%
    \put(0.54642795,0.34839606){\color[rgb]{0,0,0}\makebox(0,0)[t]{\lineheight{1.25}\smash{\begin{tabular}[t]{c}\textbf{$y$}\end{tabular}}}}%
  \end{picture}%
\endgroup%

    \caption{%
        The polytopes $P_G$ and $Q_G$ from \autoref{exmp: polytopes
        and cones} are shown in the $xy$-plane in blue and red,
        respectively.  A portion of the homogenized cone over each
        polytope is shown in the corresponding color.
    }%
    \label{fig: cone example}
\end{figure}

In 1973, Stanley proved a strong bound on the denominator of the
polytope~$Q_{G}$, which in turn implies the same bound on the minimum
quasiperiod of the quasipolynomial $S_{G}(k)$ that counts the
index-$k$ magic labelings of $G$.  Stanley stated his result in terms
of the ``completely fundamental magic labelings of $G$'', which in our
nomenclature meant the completely fundamental labeling of $\Phi(Q_G)$
specifically.

\begin{lem}[\hspace{1sp}{\cite[Proposition~2.7]{Sta1973}}]%
\label{lem: stanley cf index}%
    For a finite graph $G$, every completely fundamental magic
    labeling of $\Phi(Q_G)$ has index $1$ or $2$.  If $G$ is
    additionally bipartite, then every completely fundamental magic
    labeling of $\Phi(Q_G)$ has index~$1$.
\end{lem}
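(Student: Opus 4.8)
The plan is to reduce the statement to a description of the vertices of $Q_G$ and then to the classical structure of bases of a graph's incidence matrix. By the remark preceding \autoref{cor: quasiperiod ub}, the completely fundamental elements of $\Phi(Q_G)$ are exactly the pairs $(d_v v,\, d_v)$ as $v$ ranges over the vertices of $Q_G$, where $d_v \deftobe \den(v)$; the associated completely fundamental magic labeling $L = d_v v$ then has index $s_L(w) = d_v \cdot 1 = d_v$ at every vertex $w$. So it suffices to prove that every vertex of $Q_G$ has denominator $1$ or $2$, with denominator $1$ when $G$ is bipartite.

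Write $Q_G = \setof{L \in \R_{\ge 0}^{E} \sst N L = \mathbf{1}}$, where $N \in \R^{V \times E}$ is the vertex--edge incidence matrix of $G$, a loop at $v$ contributing the single entry $1$ in row $v$ (consistent with $s_L(v) = \sum_{e \ni v} L(e)$). The key step is to characterize the vertices of $Q_G$: by the standard fact that a point of $\setof{x \ge 0 \sst N x = \mathbf{1}}$ is a vertex if and only if the columns of $N$ indexed by its support are linearly independent, a vertex $v$ is characterized by requiring the support subgraph $H \deftobe (V, E_v)$, with $E_v \deftobe \setof{e \in E \sst v(e) > 0}$, to have incidence matrix of full column rank. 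Using the classical rank formula --- the incidence matrix of a connected graph on $p$ vertices has rank $p$ if the graph has an odd closed walk and $p-1$ otherwise --- this forces each connected component of $H$ to be a tree or a unicyclic graph whose unique cycle is odd, a loop counting as an odd cycle of length $1$ (in particular $E_v$ contains no even cycle). A leaf-peeling argument then pins down the weights: if a component has a leaf $u$ with incident support edge $e = uw$, then $v(e) = s_v(u) = 1$, forcing the remaining support edges at $w$ to sum to $0$ and hence to be absent, so $e$ is an isolated edge. Therefore each tree component is a single edge of weight $1$, each loop component is a loop of weight $1$, and each remaining component is an odd cycle $e_1, \dotsc, e_{2\ell+1}$ with no pendant edges, on which the relations $v(e_i) + v(e_{i+1}) = 1$ force $v(e_i) = \tfrac{1}{2}$ for all $i$.

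Consequently every vertex $v$ of $Q_G$ has all coordinates in $\setof{0, \tfrac{1}{2}, 1}$, with $\den(v) = 2$ exactly when the support of $v$ contains a cycle of length at least $3$ and $\den(v) = 1$ otherwise; in all cases $\den(v) \in \setof{1, 2}$, which is the first assertion. If $G$ is bipartite, it has no odd closed walk, so no support subgraph $H$ can contain a loop or an odd cycle; then every vertex of $Q_G$ is a $0/1$-vector, $\den(v) = 1$, and every completely fundamental magic labeling of $\Phi(Q_G)$ has index $1$. (Alternatively, in the bipartite case $N$ is totally unimodular, so $Q_G$ is already a lattice polytope and this conclusion is immediate.)

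The step I expect to be the main obstacle is the vertex characterization: combining the incidence-matrix rank formula with the leaf-peeling argument carefully enough to accommodate loops and to exclude pendant edges on the surviving odd cycles. Once the support of every vertex is known to be a disjoint union of single edges, loops, and odd cycles with the weights above, the bound $\den(v) \le 2$ --- and its sharpening to $\den(v) = 1$ in the bipartite case --- follows immediately.
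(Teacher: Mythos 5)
Your argument is correct.  Note first that the paper itself does not prove this lemma; it is quoted verbatim from Stanley's 1973 paper (Proposition~2.7), so there is no in-paper proof to compare against.  Your route --- identify completely fundamental elements of $\Phi(Q_G)$ with pairs $(d_v v,\, d_v)$ over vertices $v$ of $Q_G$ via the remark preceding \autoref{cor: quasiperiod ub}, characterize vertices of $\setof{x \ge 0 \sst Nx = \mathbf{1}}$ by linear independence of the support columns of the incidence matrix, apply the rank-$p$-versus-$(p-1)$ dichotomy for connected non-bipartite versus bipartite subgraphs to force each support component to be a tree or an odd-unicyclic graph, and then leaf-peel to conclude that each component is an isolated edge, a loop, or a leafless odd cycle with all weights $\tfrac{1}{2}$ --- is a complete and essentially standard proof of Stanley's result, and it correctly respects the paper's loop convention, under which $s_L(v) = \sum_{e \ni v} L(e)$ counts a loop once, so that a loop column is a unit vector and plays the role of an odd cycle of length~$1$.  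Your closing remark about total unimodularity in the bipartite case gives an independent confirmation of the second assertion.  One minor notational quibble: you overload $v$ to denote both a vertex of the polytope $Q_G$ and a vertex of the graph $G$, writing $v(e)$ and $s_v(u)$ in close proximity; this is decipherable from context but worth disentangling in a final write-up.
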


Applying \autoref{cor: quasiperiod ub} yields the following uniform
bounds.

\begin{proposition}[\hspace{1sp}{\cite[Corollary~2.8]{Sta1973}}]
    For all graphs $G$, the denominator of the polytope $Q_{G}$ is at
    most~$2$.  In particular, the minimum quasiperiod of $S_{G}(k)$ is
    at most~$2$.
\end{proposition}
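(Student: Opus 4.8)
The plan is to combine \autoref{lem: stanley cf index} with \autoref{cor: quasiperiod ub} and the elementary fact, recalled above, that the minimum quasiperiod of an Ehrhart quasipolynomial divides the denominator of the underlying rational polytope. The first step is to identify the final coordinate of a completely fundamental element of $\Phi(Q_G)$ with the index of the corresponding magic labeling: if $(L,k)$ is completely fundamental with $k \geq 1$, then $L \in kQ_G$, so $L/k \in Q_G$, and hence $s_L(v) = k$ for every vertex $v$ by linearity of $s$. Thus the completely fundamental magic labelings of $\Phi(Q_G)$ are exactly those labelings $L$ whose index equals the final coordinate $k$ of the semigroup element $(L,k)$, and by the remark preceding \autoref{cor: quasiperiod ub} these correspond precisely to the points $\bigl(\den(v)\,v,\,\den(v)\bigr)$ for $v$ a vertex of $Q_G$.

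Second, \autoref{lem: stanley cf index} asserts that every such labeling has index $1$ or $2$; equivalently, every final coordinate $\den(v)$ occurring among the completely fundamental elements of $\Phi(Q_G)$ belongs to $\{1,2\}$. Applying \autoref{cor: quasiperiod ub}, the denominator $\den(Q_G)$ is the least common multiple of this set of final coordinates, which is therefore at most $2$ (and equals $1$ if $Q_G$ is empty, e.g.\ when $G$ has an isolated vertex, in which case $S_G$ is identically zero). This proves the first assertion. The second follows immediately, since $S_G(k) = \ehr_{Q_G}(k)$ and $\mqp(\ehr_{Q_G})$ divides $\den(Q_G) \leq 2$.

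Essentially all of the content is carried by \autoref{lem: stanley cf index}, so there is no substantive obstacle in this deduction; the only points requiring care are the bookkeeping in the first step — matching Stanley's notion of a ``completely fundamental magic labeling of $G$'' to a completely fundamental element of the semigroup $\Phi(Q_G)$ and verifying that its final coordinate is exactly the index — and keeping track of degenerate cases where $Q_G$ is empty or not full-dimensional. The same argument with the bipartite half of \autoref{lem: stanley cf index} shows that for bipartite $G$ the denominator of $Q_G$ is $1$, so $S_G$ is a genuine polynomial, though that refinement is not needed for the statement as given.
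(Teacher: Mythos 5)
Your proof is correct and follows exactly the route the paper takes: apply \autoref{lem: stanley cf index} to bound the index of every completely fundamental element of $\Phi(Q_G)$ by $2$, then invoke \autoref{cor: quasiperiod ub} to convert that into a bound on $\den(Q_G)$, and finally use the standard fact that $\mqp(\ehr_{Q_G})$ divides $\den(Q_G)$. The only extra content you add is the (correct) observation that for $\Phi(Q_G)$ the final coordinate of a semigroup element necessarily equals the index of the labeling, which the paper leaves implicit.
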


Our main theorem is that \emph{no such bounds exist} on either the
denominator of the polytope $P_{G}$ or on the minimum quasiperiod of
the quasipolynomial $M_{G}(k)$ that counts the magic
$k$-labelings.%
\footnote{%
    A claim to the contrary appears as Theorem~4 of~\cite{BecFar2017}.
    However, the authors of~\cite{BecFar2017} report in private
    correspondence that the proof of their Theorem~4 contained an
    error and that an erratum is forthcoming.
} %

\begin{theorem}\label{thm: main theorem}
    There exist graphs $G$ for which the minimum quasiperiod
    of~$M_{G}$ is arbitrarily large.  In particular, for each $n \in
    \Z_{\ge 2}$, there exists a graph $G_{n}$ \textup{(}on $2n+2$
    vertices and $3n$ edges\textup{)} such that $P_{G_{n}}$ has a
    vertex with denom\-inator~$n-1$, and the minimum quasiperiod of
    the quasipolynomial $M_{G_{n}}$ is~$n-1$.
\end{theorem}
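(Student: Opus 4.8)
The plan is to construct the family $G_n$ explicitly and then exhibit a single vertex of $P_{G_n}$ whose denominator is $n-1$. By \autoref{cor: quasiperiod ub}, the denominator of $P_{G_n}$ is the least common multiple of the final coordinates of the completely fundamental elements of $\Phi(P_{G_n})$; since denominators of individual vertices always divide $\den(P_{G_n})$, and since the minimum quasiperiod of $M_{G_n}$ divides $\den(P_{G_n})$, it suffices to (a) produce a vertex $v^\ast$ of $P_{G_n}$ with $\den(v^\ast) = n-1$, which forces $\den(P_{G_n}) \geq n-1$ and $n-1 \mid \den(P_{G_n})$, and then (b) argue that no larger prime power can divide $\den(P_{G_n})$, so in fact $\den(P_{G_n}) = n-1$, and separately that $\mqp(M_{G_n})$ is exactly $n-1$ rather than a proper divisor.

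For the construction, I would take a bipartite graph built from $n$ parallel "paths of length three" joining two hub-like gadgets, arranged so that a magic labeling is governed by a single cyclic constraint of length $n$ with a mismatch that produces a factor of $n-1$ in the vertices; the stated counts ($2n+2$ vertices, $3n$ edges, planar, bipartite) pin down the shape. The magicness conditions $s_L(v) = s_L(w)$ together with the box constraint $L(e) \in [0,1]$ cut out $P_{G_n}$; I would solve this linear system to locate the vertices. The key computation is to identify one vertex $v^\ast$ at which a chain of $n-1$ of the defining hyperplanes (the $L(e)=0$ or $L(e)=1$ facets, plus the magic equalities) meet, forcing the free coordinates to take values with common denominator exactly $n-1$ — e.g. values of the form $j/(n-1)$ for $j = 0,1,\dots,n-1$ propagating around the cycle. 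Verifying that $v^\ast$ is genuinely a vertex (that the active constraints have full rank $|E|$) and that $\den(v^\ast) = n-1$ (no cancellation reduces the denominator) is the heart of the argument.

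The main obstacle I expect is the \emph{exactness} of the quasiperiod, not the lower bound. Getting $\den(P_{G_n}) \geq n-1$ is the easy direction once the vertex is found. The harder parts are: first, showing $\den(P_{G_n}) \leq n-1$, i.e. that \emph{every} completely fundamental element of $\Phi(P_{G_n})$ has final coordinate dividing $n-1$ — this requires controlling all vertices of $P_{G_n}$, which for a hands-on gadget should follow from the sparsity/structure of the incidence system but needs care; and second, upgrading $\den(P_{G_n}) = n-1$ to $\mqp(M_{G_n}) = n-1$, since in general the minimum quasiperiod only divides the denominator. For the latter I would use the characterization from the preliminaries that $\mqp(M_{G_n}) = \lcm\{s_0,\dots,s_d\}$ of the periods of the Ehrhart coefficient functions, and argue that the \emph{leading} or some intermediate coefficient function genuinely has period $n-1$ — for instance by computing $M_{G_n}(k)$ on two residue classes mod $n-1$ (say $k \equiv 0$ versus $k \equiv 1$) and showing the constituents differ; concretely, exhibiting that $M_{G_n}$ restricted to multiples of $n-1$ and to $1 + (n-1)\Z$ disagree as polynomials. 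If a clean closed form for $M_{G_n}(k)$ is available from the product structure of the graph, this disagreement can be read off directly.

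I would organize the write-up as: (1) define $G_n$ and record $|V|, |E|$, planarity, bipartiteness; (2) write down $P_{G_n}$ and exhibit the vertex $v^\ast$ with $\den(v^\ast) = n-1$, giving the lower bound $n-1 \mid \den(P_{G_n})$ via \autoref{cor: quasiperiod ub}; (3) classify the remaining completely fundamental labelings to get $\den(P_{G_n}) = n-1$; (4) compute (or estimate) enough of $M_{G_n}$ to conclude $\mqp(M_{G_n}) = n-1$.
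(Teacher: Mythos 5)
Your plan and graph construction match the paper's: $G_n$ is precisely $n$ paths of length three joining two hubs $x$ and $y$, and the special vertex of $P_{G_n}$ is the point with label $1$ on every middle edge $a_ib_i$ and label $1/(n-1)$ on every edge incident to $x$ or $y$, so $\den(v^\ast)=n-1$ and hence $(n-1)\mid\den(P_{G_n})$. You are also right that steps (3) and (4) are the delicate part. The gap is in your concrete proposal for step (4). Exhibiting that the constituents of $M_{G_n}$ on the classes $k\equiv 0$ and $k\equiv 1\pmod{n-1}$ disagree only shows $\mqp(M_{G_n})>1$; it does not rule out a proper divisor $d$ of $n-1$ with $1<d<n-1$, since $0\not\equiv 1\pmod d$ for any such $d$, so those two constituents would still differ even if the true quasiperiod were $d$. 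To pin down $\mqp$ you must rule out \emph{every} proper divisor of $n-1$, which is exactly why you fall back on the coefficient-period characterization — but then you still need a mechanism for computing those periods. (A small misprediction: it is not the leading or an intermediate coefficient that has period $n-1$; the paper shows only the degree-$0$ coefficient function is nonconstant.)

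The paper supplies the missing mechanism. First it derives the closed form $M_{G_n}(k)=\binom{k+n}{n}+F_{n-1}(k)$ with $F_m(k)=\sum_{j\equiv k\,(m),\,0\le j\le k}\binom{j}{m}$, by parametrizing magic labelings of $G_n$ by the $n$ labels on the edges at $x$. Then, rather than extracting coefficient periods directly, it observes (via \cite[Lemma~2.1]{SamWoo2010}) that the finite-difference operator $\fd$ \emph{preserves} minimum quasiperiod, and computes $\fd^{m}F_{m}(t)=\lfloor t/m\rfloor+1$, a quasipolynomial whose minimum quasiperiod is visibly $m$. Propagating back through $\fd$ gives $\mqp(F_{n-1})=n-1$ on the nose, hence $\mqp(M_{G_n})=n-1$. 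This is the clean substitute for your ``compare two residue classes'' test: instead of a finite sample of constituents, it tracks an invariant ($\mqp$) through an operator that strips off a degree at each step until the answer is apparent. Your step (3), classifying all completely fundamental elements of $\Phi(P_{G_n})$ to bound the denominator from above, is indeed carried out in the paper (the completely fundamental elements are $(L_1,1),\dots,(L_n,1),(\vec 0,1),(L_\ast,n-1)$), though once the explicit $\mqp$ computation is in hand, that upper bound is no longer strictly needed for the quasiperiod claim.
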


The proof of this theorem appears at the end of \cref{sec: Gn
quasipolynomial}.  The outline of the argument is as follows.  The
construction of $G_{n}$ is at the beginning of \autoref{sec: unbounded
max label}, and the fact that $P_{G_{n}}$ has a vertex with
denominator $n-1$ is a direct consequence of \autoref{thm: vertices of
PGn}.  Now, since the denominator is only an upper bound on the
minimum quasi\-period of $M_G$, the strategy explored in
\cite{BecFar2017} might still be salvaged if $P_G$ exhibited so-called
``period collapse" \cite{McA2021}.  However, in \cref{prop: Mn to
Fn-1,cor: quasiperiod of Fn}, we establish that the Ehrhart
quasipolynomial $M_{G_n}$ of $P_{G_n}$ has ``full period".  That is,
the quasiperiod of the quasipolynomial attains the upper bound in
\autoref{cor: quasiperiod ub}.  In particular, the minimum quasiperiod
of $M_{G_{n}}$ is $n-1$, as claimed in \cref{thm: main theorem}.


\begin{exmp}
    Consider $4$ copies of the path on $3$ vertices, where the ends of
    the paths are labeled by $x_i$ and $y_i$ for $1 \leq i \leq 4$.
    Let $G_4$ be the graph on $10$ vertices obtained by identifying
    all the $x_i$ vertices as a single vertex~$x$ and all the $y_i$
    vertices as a single vertex $y$.  Then $M_{G_4}(k)$ is the
    following quasipolynomial with minimum quasiperiod $3$:
    \begin{equation*}
        M_{G_4}(k) 
        = 
        \begin{dcases*}
            \tfrac{1}{18}k^4 + \tfrac{4}{9}k^3 + \tfrac{25}{18}k^2 + 2k + 1 
                & if $k \equiv 0 \mod{3}$, \\
            \tfrac{1}{18}k^4 + \tfrac{4}{9}k^3 + \tfrac{25}{18}k^2 + 2k + \tfrac{10}{9}
                & if $k \equiv 1 \mod{3}$, \\
            \tfrac{1}{18}k^4 + \tfrac{4}{9}k^3 + \tfrac{25}{18}k^2 + 2k + 1 
                & if $k \equiv 2 \mod{3}$.
        \end{dcases*}
    \end{equation*}
    The fact that only the degree-$0$ coefficient varies is explained
    in the remark following the proof of \cref{cor: quasiperiod of Fn}
    below.
\end{exmp}

The contrast between our results and Stanley's results is rooted in
the difference between $\Phi(P_G)$ and $\Phi(Q_G)$.  It follows from
\autoref{thm: main theorem} and \autoref{cor: quasiperiod ub} that the
analogue of \autoref{lem: stanley cf index} is false if we replace
$\Phi(Q_G)$ with $\Phi(P_G)$, and moreover no uniform bound can be
placed on the index of completely fundamental magic labelings of
$\Phi(P_G)$.

However, an analogue of \autoref{lem: stanley cf index} does hold for
certain types of graphs.  We show in \autoref{sec: small quasiperiod}
that if $G$ has an edge that attains the maximum label in every
index-$2$ magic labeling, then the completely fundamental magic
labelings of $\Phi(P_G)$ have index at most $2$.  Moreover, if $G$ is
additionally bipartite, then the completely fundamental magic
labelings of $\Phi(P_G)$ have index at most $1$.  In particular, if
$G$ is any bipartite graph with a leaf, it follows that $M_k(G)$ is a
polynomial.

\section{%
    Unbounded completely fundamental labelings
}\label{sec: unbounded max label}%

In this section, we construct a family $\setof{G_n}_{n \ge 2}$ of
graphs for which there are completely fundamental magic labelings of
$\Phi(P_{G_n})$ with arbitrarily large maximum label.  This means that
there is no uniform upper bound on the denominator of $P_{G}$.
\begin{defn}\label{defn: G_n}
    For each integer $n \geq 2$, let $G_n$ be the graph on the vertex
    set $\setof{a_1, \dotsc, a_n, b_1, \dotsc, b_n, x, y}$ with the
    following $3n$ edges:
    \begin{itemize}
        \item
        an edge from $a_i$ to $b_i$ for each $i \in \setof{1, \dotsc,
        n}$,
        
        \item
        an edge from $x$ to $a_i$ for each $i \in \setof{1, \dotsc,
        n}$, and
        
        \item
        an edge from $y$ to $b_i$ for each $i \in \setof{1, \dotsc,
        n}$.
    \end{itemize}
\end{defn}

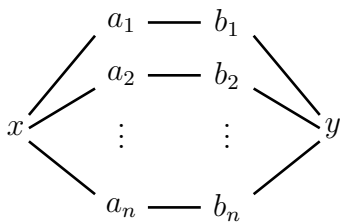
\begin{figure}
    \centering
    \begin{tikzpicture}[x=0.5pt, y=0.5pt]
        \node at (290,490) [opacity=1] {\textcolor[RGB]{0,0,0}{$a_1$}};
        \node at (290,450) [opacity=1] {\textcolor[RGB]{0,0,0}{$a_2$}};
        \node at (370,410) [opacity=1] {\textcolor[RGB]{0,0,0}{$\vdots$}};
        \node at (290,350) [opacity=1] {\textcolor[RGB]{0,0,0}{$a_n$}};
        \node at (370,490) [opacity=1] {\textcolor[RGB]{0,0,0}{$b_1$}};
        \node at (370,450) [opacity=1] {\textcolor[RGB]{0,0,0}{$b_2$}};
        \node at (370,350) [opacity=1] {\textcolor[RGB]{0,0,0}{$b_n$}};
        \node at (210,410) [opacity=1] {\textcolor[RGB]{0,0,0}{$x$}};
        \node at (450,410) [opacity=1] {\textcolor[RGB]{0,0,0}{$y$}};
        \node at (290,410) [opacity=1] {\textcolor[RGB]{0,0,0}{$\vdots$}};
        \draw[solid, draw={rgb,255:red,0;green,0;blue,0}, draw opacity=1, line width=1, ] (220,420) -- (270,480);
        \draw[solid, draw={rgb,255:red,0;green,0;blue,0}, draw opacity=1, line width=1, ] (220,410) -- (270,440);
        \draw[solid, draw={rgb,255:red,0;green,0;blue,0}, draw opacity=1, line width=1, ] (220,400) -- (270,360);
        \draw[solid, draw={rgb,255:red,0;green,0;blue,0}, draw opacity=1, line width=1, ] (440,420) -- (390,480);
        \draw[solid, draw={rgb,255:red,0;green,0;blue,0}, draw opacity=1, line width=1, ] (440,410) -- (390,440);
        \draw[solid, draw={rgb,255:red,0;green,0;blue,0}, draw opacity=1, line width=1, ] (440,400) -- (390,360);
        \draw[solid, draw={rgb,255:red,0;green,0;blue,0}, draw opacity=1, line width=1, ] (310,490) -- (350,490);
        \draw[solid, draw={rgb,255:red,0;green,0;blue,0}, draw opacity=1, line width=1, ] (310,450) -- (350,450);
        \draw[solid, draw={rgb,255:red,0;green,0;blue,0}, draw opacity=1, line width=1, ] (310,350) -- (350,350);
    \end{tikzpicture}
    \caption{The construction of the graph $G_n$.}
\end{figure}

We will construct a completely fundamental magic labeling of
$\Phi(P_{G_n})$ with maximum edge label $n-1$.

\begin{defn}\label{defn: L labeling}
    Let $\Lstar$ be the labeling on the edges of $G_n$ where edges
    from~$a_i$ to $b_i$ have label $n-1$ and all other edges of label
    $1$.
\end{defn}

It is straightforward to check that $\Lstar$ is a magic labeling of
index $n$, but it remains to show that this is a completely
fundamental labeling of $\Phi(P_{G_n})$.  In order to show this, we
first consider the perfect matchings on $G_n$.  A \defing{perfect
matching} in $G$ is a subset $J$ of the edges such that each vertex in
$G$ is incident to exactly one edge of $J$.  Note that a magic
labeling of $G$ of index $1$ can be identified with a perfect matching
of $G$ by taking the set of edges of $G$ with label~$1$.

\begin{prop}\label{prop: Gn perfect matchings}
    For $1 \leq i \leq n$, let $L_i$ be the perfect matching on $G_n$
    formed by taking the edge from $x$ to $a_i$, from $y$ to $b_i$,
    and all edges from $a_j$ to $b_j$ for $j \neq i$.  All perfect
    matchings on $G_n$ are of this form.
\end{prop}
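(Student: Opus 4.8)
The plan is to argue vertex by vertex, using only the defining property of a perfect matching (each vertex is saturated by exactly one edge) together with the very restricted set of edges incident to $x$ and to $y$. First I would observe that the only edges incident to $x$ are the edges $x a_i$ for $i \in \{1, \dots, n\}$, so any perfect matching $J$ of $G_n$ must contain exactly one such edge, say $x a_i$; symmetrically, the only edges incident to $y$ are the $y b_j$, so $J$ contains exactly one edge $y b_j$.

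Next I would pin down the relationship between $i$ and $j$. Since $a_i$ is already saturated by $x a_i$, the edge $a_i b_i$ cannot lie in $J$. But the only edges incident to $b_i$ are $a_i b_i$ and $y b_i$, so $J$ must contain $y b_i$; hence $j = i$. At this stage both $x$ and $y$ are matched, and $x a_i$ and $y b_i$ are the only edges of $J$ meeting $\{x, y\}$.

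Finally, for each $k \neq i$, the only edges incident to $a_k$ are $x a_k$ and $a_k b_k$; the former is unavailable because $x$ has already been matched to $a_i$, so $a_k b_k \in J$. These edges $a_k b_k$ for $k \neq i$ simultaneously saturate every remaining vertex $b_k$, so $J$ consists exactly of $x a_i$, $y b_i$, and the edges $a_k b_k$ with $k \neq i$; that is, $J = L_i$. Conversely, each $L_i$ visibly saturates every vertex exactly once, so the $L_i$ are precisely the perfect matchings of $G_n$.

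I do not anticipate a genuine obstacle here: the argument is a short saturation/degree count. The only subtlety is the order in which the forced choices are exposed — one must use the low-degree vertices $x$ and $y$ first, so that $a_i b_i$ is excluded and $y$ is pinned to $b_i$, before concluding that the rest of the matching is forced to be $\{a_k b_k : k \neq i\}$.
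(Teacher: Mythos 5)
Your proof is correct and complete. The paper actually states \autoref{prop: Gn perfect matchings} without proof, treating it as a routine observation about the structure of $G_n$; your degree-two saturation argument (match $x$, thereby excluding $a_ib_i$, force $yb_i$, then force $a_kb_k$ for $k\neq i$) is precisely the natural argument one would give, and the ordering you flag---handle $x$ and $y$ first before concluding the rest is forced---is the right place to be careful.
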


Let $\max(L)$ denote the maximum label appearing in a labeling $L$,
and write~$\vec{0}$ for the trivial magic labeling under which every
edge is labeled~$0$.

\begin{lem}\label{lem: G_n magic labeling generators}
    Any element of $\Phi(P_{G_n})$ can be written as a nonnegative
    integer combination of the elements $(L_1,1)$, $(L_2,1)$, \dots,
    $(L_n,1)$, $(\vec{0},1)$, and $(\Lstar,n-1)$.
\end{lem}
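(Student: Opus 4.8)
The plan is to show that any lattice point $(L,k)\in\Phi(P_{G_n})$ decomposes as a nonnegative integer combination of the listed generators, and the natural strategy is greedy: first subtract off as many copies of $(\Lstar,n-1)$ as possible, then handle the remainder using the perfect-matching generators and $(\vec 0,1)$. So first I would let $L$ be any magic $k$-labeling of $G_n$, and let $m\deftobe\min_i L(a_ib_i)$ be the smallest label on the ``middle'' edges. Writing $q\deftobe\floor{m/(n-1)}$, I would like to claim that $L-q\Lstar$ is still a (nonnegative, hence valid) magic labeling; this needs $q(n-1)\le L(a_ib_i)$ for every $i$ (true by choice of $m$) and $q\le L(xa_i),L(yb_i)$ for every $i$. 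This last inequality is where the magic condition does the work: the index of $L$ is $s_L(x)=\sum_i L(xa_i)$, while $s_L(a_i)=L(xa_i)+L(a_ib_i)$ equals that same index, so $L(xa_i)=\bigl(\sum_j L(xa_j)\bigr)-L(a_ib_i)=\text{index}-L(a_ib_i)\ge \text{index}-\max_j L(a_jb_j)$. Hmm — this shows the outer labels are large when the middle labels are \emph{small}, which is the opposite of what a crude bound gives, so I need to be a little more careful and probably argue by descent rather than by one clean inequality.

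The cleaner version of the argument: after subtracting copies of $\Lstar$ I reduce to the case where $\min_i L(a_ib_i)\le n-2$. In this regime I want to peel off a perfect matching $L_i$. Pick an index $i$ with $L(a_ib_i)$ minimal. The key structural fact, which I would extract from \autoref{prop: Gn perfect matchings} together with the magic equations, is that in this regime every edge $a_jb_j$ with $j\ne i$, as well as $xa_i$ and $yb_i$, carries a label $\ge 1$, so that $L-L_i$ is again a nonnegative magic labeling (of index one less). Concretely: $s_L(a_j)=L(xa_j)+L(a_jb_j)$ is the common index $I$, and likewise $s_L(b_j)=L(yb_j)+L(a_jb_j)=I$, and $s_L(x)=\sum_j L(xa_j)=I$. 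From $\sum_j L(xa_j)=I$ and $L(xa_j)=I-L(a_jb_j)$ I get $\sum_j\bigl(I-L(a_jb_j)\bigr)=I$, i.e. $\sum_j L(a_jb_j)=(n-1)I$. So if all middle labels were $\ge n-1$ we would have already been in the ``subtract $\Lstar$'' case with $q\ge1$; hence after reduction $\min_j L(a_jb_j)\le n-2$, which forces $I-L(a_ib_i)=L(xa_i)\ge I-(n-2)$, and since $I\ge n-1$ whenever $L\ne\vec0$ (because $\sum_j L(a_jb_j)=(n-1)I$ with the middle labels summing to a multiple of $n-1$), we get $L(xa_i)\ge1$, similarly $L(yb_i)\ge1$, and $L(a_jb_j)\ge1$ for $j\ne i$ as long as not all of them vanish. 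I would then induct on the index $I$ (or equivalently on $\sum_e L(e)$): subtract $\Lstar$'s until $\min_j L(a_jb_j)\le n-2$, then subtract one $L_i$, repeat; the base case is $L=\vec0$, handled by $(\vec0,1)$, and the bookkeeping of the final coordinate $k$ is absorbed by adding the appropriate number of copies of $(\vec0,1)$ at the end since $k\ge$ the index always.

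The main obstacle I anticipate is making the case split airtight: I need to verify that in the post-$\Lstar$-reduction regime one can \emph{always} subtract some $L_i$ without creating a negative label, and the delicate point is the edge $a_ib_i$ itself — subtracting $L_i$ removes label $1$ from $xa_i$ and $yb_i$ but \emph{not} from $a_ib_i$, so I must make sure $L(xa_i)\ge1$ and $L(yb_i)\ge1$ for the chosen $i$, together with $L(a_jb_j)\ge1$ for all $j\ne i$. The computation above says $L(xa_i)=I-L(a_ib_i)$, so choosing $i$ to \emph{maximize} $L(a_ib_i)$ is wrong and choosing it to \emph{minimize} $L(a_ib_i)$ is right; and I must separately rule out the degenerate situation where some $L(a_jb_j)=0$ for $j\ne i$ while $L\ne\vec0$ — but if $L(a_jb_j)=0$ then $L(xa_j)=I$ and $L(yb_j)=I$, and since $\sum L(xa_\ell)=I$ this forces $L(xa_\ell)=0$ for $\ell\ne j$ and hence $L(a_\ell b_\ell)=I$ for all $\ell\ne j$; if $I\ge n-1$ this puts us back in the $\Lstar$-subtraction case, and if $I<n-1$ then $\sum L(a_\ell b_\ell)=(n-1)I$ with $n-1$ terms forces $L(a_\ell b_\ell)=I$ for all $\ell$ including $j$, contradicting $L(a_jb_j)=0$ unless $I=0$, i.e. $L=\vec0$. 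Once these cases are dispatched the induction closes and the lemma follows.
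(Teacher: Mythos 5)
Your greedy/descent strategy is a reasonable instinct, but the specific algorithm you propose has a fatal flaw, and several of the auxiliary inequalities you rely on are false. The step ``subtract copies of $\Lstar$ until $\min_j L(a_jb_j)\le n-2$'' cannot always be carried out: take $u_1=0$ and $u_2=\cdots=u_n=C$ for $C$ large, so $I=(n-1)C$, $L(a_1b_1)=(n-1)C$, and $L(a_jb_j)=(n-2)C$ for $j\ge 2$. Then every middle label is $\ge n-1$ (so your $q\ge 1$), yet $L(xa_1)=0$, so $L-\Lstar$ already has a negative label. Your algorithm stalls at the very first step even though the lemma is true for this $L$. The auxiliary claim ``$I\ge n-1$ whenever $L\ne\vec 0$'' is also false: any single perfect matching $L_i$ has index $1$. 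And the bookkeeping remark ``$k\ge\text{index}$ always'' is wrong: for $L=\Lstar$ itself we have $\max(L)=n-1$ but index $n$, so $k$ can be strictly less than the index. These are not cosmetic slips; they sit in exactly the places your induction needs to close, so the argument as written does not go through.

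What you are missing is that the right quantity to key the decomposition on is the minimum \emph{outer} label $u_m=\min_i L(xa_i)$, not anything built from the middle labels. The paper's proof is a one-line identity, not a descent: after using $(\vec 0,1)$ to normalize $k=\max(L)$, set $u_i=L(xa_i)$; the magic condition forces $L(a_jb_j)=\bigl(\sum_i u_i\bigr)-u_j$ and $L(yb_j)=u_j$, so $\max(L)=\bigl(\sum_i u_i\bigr)-u_m$ where $u_m=\min_i u_i$, and one checks directly that
\[
(L,\max(L)) \;=\; \sum_{i=1}^n (u_i-u_m)\,(L_i,1) \;+\; u_m\,(\Lstar,\,n-1),
\]
with all coefficients nonnegative. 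Your computations around $\sum_j L(a_jb_j)=(n-1)I$ and $L(xa_j)=I-L(a_jb_j)$ are correct and in fact put you one step away from this identity; the issue is that you tried to drive the middle labels down via $\Lstar$ rather than noticing that $u_m$ itself is exactly the multiplicity of $\Lstar$ you want. If you insist on a descent, you would need to peel off $L_i$ for an $i$ maximizing $u_i$ whenever the $u_i$ are not all equal (checking that $\max(L)$ drops by exactly $1$), and only when all $u_i$ are equal conclude $L$ is a multiple of $\Lstar$; that works, but the closed form is cleaner.
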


\begin{proof}
    Fix $(L,k)$ in $\Phi(P_{G_n})$.  By subtracting off copies of
    $(\vec 0,1)$, we can assume $k=\max(L)$.  For $1\le i\le n$, let
    $u_i=L(xa_i)$, i.e., the label of the edge between $x$ and $a_i$
    in $L$.  The index of $L$ is then $\sum_{i=1}^n u_i$.  In order to
    have the correct sum at $a_j$, we must have
    $L(a_jb_j)=(\sum_{i=1}^n u_i)-u_j$.  Similarly, $L(b_jy)=u_j$ in
    order to have the correct sum at $b_j$.
    The maximum label in $L$ is then
    \begin{equation*}
        \max(L)
        =
        \biggparens{\sum_{i=1}^n u_i} - \min_{1 \leq i \leq n} u_i.
    \end{equation*}
    Let $m$ be such that $\displaystyle u_m = \min_{1 \leq i \leq n}
    u_i$.  It is then straightforward to check that
    \begin{equation*}
        (L,\max(L)) 
        =
        \biggparens{
            \sum_{i=1}^n 
            (u_i - u_m) \cdot (L_i,1) 
        }
        +
        u_m \cdot (\Lstar, n-1).
    \end{equation*}
    Thus, we can conclude that $(L, \max(L))$ can be decomposed as
    desired.
\end{proof}

Note that the magic labeling $\Lstar$ is equal to the sum of the magic
labelings~$L_i$ for $1 \leq i \leq n$.  However, if we additionally
choose an appropriate bound for the maximum label, this no longer
holds.  That is, the element $(\Lstar,n-1)$ is not a sum of the
elements $(L_i,1)$ in $\Phi(P_{G_n})$, which is a consequence of the
following result.

\begin{thm}\label{thm: vertices of PGn}
    The completely fundamental elements of $\Phi(P_{G_n})$ are
    $(L_1,1)$, $(L_2,1)$, \dots, $(L_n,1)$, $(\vec 0,1)$, and
    $(\Lstar,n-1)$.
\end{thm}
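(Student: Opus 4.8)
The plan is to reduce the theorem to a vertex enumeration using the remark preceding \autoref{cor: quasiperiod ub}, which identifies the completely fundamental elements of $\Phi(P)$ with the points $(d_v v, d_v)$ for $v$ a vertex of $P$ and $d_v \deftobe \den(v)$. The proof of \autoref{lem: G_n magic labeling generators} already supplies good coordinates: a magic labeling $L$ of $G_n$ is determined by $u = (u_1, \dotsc, u_n)$ with $u_i = L(xa_i)$, via $L(a_jb_j) = \bigl(\sum_i u_i\bigr) - u_j$ and $L(b_jy) = u_j$. First I would observe that the resulting linear bijection carries $P_{G_n}$ onto
\[
    \widetilde{P}_n \deftobe \setof{u \in \R_{\ge 0}^n \sst \textstyle\sum_{i \ne j} u_i \le 1 \text{ for every } j},
\]
the listed inequalities encoding $L(a_jb_j) \le 1$, the inequalities $u_i \ge 0$ encoding nonnegativity of labels, and the remaining conditions $L(xa_i), L(b_iy) \le 1$ (i.e.\ $u_i \le 1$) being redundant because $u_i \le \sum_{k \ne j} u_k \le 1$ for any $j \ne i$ once $n \ge 2$. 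This bijection sends $\vec 0 \mapsto \vec 0$, the $i$-th standard basis vector $e_i \mapsto L_i$, and $\tfrac1{n-1}(1, \dotsc, 1) \mapsto \tfrac1{n-1}\Lstar$ (a point of denominator $n-1$), and it carries vertices to vertices while preserving denominators; so the theorem reduces to showing these $n+2$ points are exactly the vertices of $\widetilde{P}_n$.

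That each is a vertex I would check by exhibiting $n$ tight defining inequalities there whose gradients are linearly independent: $u_j \ge 0$ $(1 \le j \le n)$ at $\vec 0$; $u_j \ge 0$ $(j \ne i)$ together with $\sum_{k \ne j} u_k \le 1$ $(j \ne i)$ at $e_i$; and all $n$ inequalities $\sum_{k \ne j} u_k \le 1$ at $\tfrac1{n-1}(1, \dotsc, 1)$, whose gradients $(1, \dotsc, 1) - e_j$ are independent since $\det(J - I) = (-1)^{n-1}(n-1) \ne 0$ for the all-ones matrix $J$.

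The step I expect to be the main obstacle is proving there are no other vertices. Let $v$ be a vertex of $\widetilde{P}_n$ and put $A \deftobe \setof{j : v_j = 0}$, with complement $A^c$. The defining inequalities tight at $v$ are $u_j \ge 0$ for $j \in A$ and $\sum_{k \ne j} u_k \le 1$ for $j$ in some set $F$, and their gradients must have rank $n$. If $A^c = \emptyset$ then $v = \vec 0$; if $\abs{A^c} = 1$, say $A^c = \setof i$, then the single further tight inequality is $\sum_{k\ne j}u_k \le 1$ for some $j \ne i$, which reads $v_i = 1$, so $v = e_i$. If $\abs{A^c} \ge 2$: were some $j_1 \in F \cap A$, then (as $v_{j_1} = 0$) $\sum_k v_k = 1$, which is incompatible with $\sum_{k \ne j} v_k = 1$ for any $j \in A^c$, forcing $F \subseteq A$; but then every gradient lies in the span of $\setof{e_j : j \in A}$ and $(1, \dotsc, 1)$, of dimension $\abs{A} + 1 < n$, contradicting that $v$ is a vertex. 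So $F \subseteq A^c$, and rank $n$ forces $F = A^c$; then $\sum_{k\ne j}v_k = 1$ for all $j \in A^c$ makes the entries $v_j$ $(j \in A^c)$ equal to a common value $\gamma$ with $(\abs{A^c} - 1)\gamma = 1$, so $\sum_{k\ne j}u_k \le 1$ at any $j \in A$ would read $\abs{A^c}\gamma = \abs{A^c}/(\abs{A^c} - 1) \le 1$, which is false. Therefore $A = \emptyset$ and $v = \tfrac1{n-1}(1, \dotsc, 1)$. Reading off the denominators $1, 1, \dotsc, 1, n-1$ of the resulting vertices and applying the remark yields precisely the completely fundamental elements $(\vec 0, 1)$, $(L_1, 1), \dotsc, (L_n, 1)$, and $(\Lstar, n-1)$.
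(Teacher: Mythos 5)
Your proposal is correct, and it takes a genuinely different route from the paper's own argument. The paper's proof leans entirely on \autoref{lem: G_n magic labeling generators}: having shown that these $n+2$ elements generate $\Phi(P_{G_n})$, the paper verifies complete fundamentality of each generator by a short ad hoc check that no positive multiple of one can be written as a positive combination of the others (e.g.\ for $(\Lstar, n-1)$ it uses the invariant, preserved under addition of the other generators, that the last coordinate is at least the index). You instead take the remark preceding \autoref{cor: quasiperiod ub} as your starting point, pass to the $u$-coordinates to realize $P_{G_n}$ as an explicit polytope $\widetilde{P}_n \subset \R^n$, and enumerate its vertices directly via a case analysis on which facets are tight. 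Your route is longer but more self-contained: it does not invoke \autoref{lem: G_n magic labeling generators} at all (indeed it reproves, via the vertex description, that those three types of points are vertices rather than merely generators), and the facet-tightness analysis would adapt mechanically to variations of the polytope. The paper's route is shorter precisely because it outsources the structural work to the lemma and then needs only light bookkeeping. One small slip in your write-up: at $e_i$ you list $2(n-1)$ tight inequalities while describing them as ``$n$ tight defining inequalities whose gradients are linearly independent''; for $n \geq 3$ not all $2(n-1)$ gradients can be independent in $\R^n$, though some $n$ of them are, so the conclusion that $e_i$ is a vertex stands.
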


\begin{proof}
    By \autoref{lem: G_n magic labeling generators}, it is enough to
    show that no multiple of one of these elements can be written as a
    positive integer combination of the others.

    This clearly holds for $(\vec 0,1)$, since all other elements have
    a positive edge label.  This also holds for $(\Lstar,n-1)$ since
    $(\vec 0,1)$ and the $(L_i,1)$ satisfy the property that the last
    entry is at least the index of the labeling, and this property is
    preserved under sums.  Lastly, the theorem statement holds for
    each element $(L_i,1)$, as the only other element with label $0$
    on the edge between~$a_i$ and $b_i$ is $(\vec 0,1)$ and no
    multiple of $(\vec 0,1)$ is equal to a multiple of $(L_i,1)$.
\end{proof}

\begin{rem}
    Each graph $G_n$ can be generalized to a family of graphs
    $G_{n,p}$ for $p \ge 1$ as follows, while retaining the same
    Ehrhart quasipolynomial to count the magic labelings.

    Let $G_{n,p}$ denote the graph obtained by taking two vertices $x$
    and $y$ and connecting them by $n$ distinct paths of length
    $2p+1$.  Since this graph is bipartite, any magic labeling can be
    decomposed as a sum of perfect matchings.  Moreover, a perfect
    matching of $G_{n,p}$ is determined by a choice of edge incident
    to $x$, as is the case for $G_n$.  This induces a bijection
    between perfect matchings on $G_n$ and perfect matchings on
    $G_{n,p}$, and it is straightforward to show that this bijection
    can be extended additively to magic labelings.  Thus, we have
    $M_{G_n}(k) = M_{G_{n,p}}(k)$.
\end{rem}

\section{%
    The Ehrhart quasipolynomial of
    \texorpdfstring{$P_{G_n}$}{\uniG\unisubn}
}\label{sec: Gn quasipolynomial}%

This section is focused on studying the Ehrhart quasipolynomial of
$P_{G_n}$, where $G_n$ is the graph constructed in \autoref{sec:
unbounded max label}.  In \autoref{subsec: computing ehrhart
quasipolynomial}, we give an explicit formula for this Ehrhart
quasipolynomial as a sum of certain binomial coefficients.
Ultimately, we are interested in finding the minimum quasiperiod of
this quasipolynomial in order to prove the main result (\autoref{thm:
main theorem}).  Using tools developed in \autoref{subsec: tools for
bounding}, we precisely compute the minimum quasiperiod in
\autoref{subsec: minimum quasiperiod}.

\subsection{%
    Computing the Ehrhart Quasipolynomial of
    \texorpdfstring{$P_{G_n}$}{\uniG\unisubn}
}\label{subsec: computing ehrhart quasipolynomial}%

Let $M_{n}(k) \deftobe M_{G_n}(k)$ be the number of integer points in
$kP_{G_n}$, where~$P_{G_n}$ is the polytope of magic labelings of
$G_n$ with maximum label at most $1$.  That is, $M_n$ is the Ehrhart
quasipolynomial of $P_{G_n}$.  Equivalently, $M_{n}(k)$ is the number
of integral magic labelings of $G_n$ with maximum label at most $k$.
For all $n \ge 1$, define the function $F_n \maps \Z_{\ge 0} \to \C$
by
\begin{equation*}
    F_n(k)
    \deftobe
    \sum_{\substack{j \in [0,k]_{\Z} \sst \\ j \equiv k \mod{n}}}
    \binom{j}{n}.
\end{equation*}

\begin{proposition}\label{prop: Mn to Fn-1}
    For $n \geq 2$ and nonnegative $k$, we have
    $$M_{n}(k) = \binom{k + n}{n} + F_{n-1}(k)\,.$$
\end{proposition}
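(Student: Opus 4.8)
The plan is to turn $M_n(k)$ into a pure lattice-point count over $\Z_{\ge 0}^n$, extract from it a recursion in $k$ with step $n-1$, and then check that the proposed right-hand side satisfies that same recursion and the same initial values.

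First I would recall the explicit parametrization of the magic labelings of $G_n$ underlying the proof of \autoref{lem: G_n magic labeling generators}. A magic labeling $L$ of $G_n$ is determined by the tuple $(u_1, \dotsc, u_n) \in \Z_{\ge 0}^n$ with $u_i = L(xa_i)$, via $L(b_i y) = u_i$ and $L(a_i b_i) = \bigl(\sum_{j=1}^n u_j\bigr) - u_i$; and conversely every such tuple yields a magic labeling, of index $\sum_j u_j$, since the resulting labels are all nonnegative. Moreover $\max(L) = \bigl(\sum_{j=1}^n u_j\bigr) - \min_i u_i$, the hypothesis $n \ge 2$ being exactly what guarantees that this quantity is at least $\max_i u_i$. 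Hence
\[
    M_n(k) = \#\Bigl\{\, u \in \Z_{\ge 0}^n \sst \sum_{i=1}^n u_i - \min_i u_i \le k \,\Bigr\}.
\]

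Next I would split this count according to whether $\min_i u_i = 0$. When $\min_i u_i = 0$ the defining inequality reads simply $\sum_i u_i \le k$; let $A_n(k) \deftobe \#\{\, u \in \Z_{\ge 0}^n \sst \min_i u_i = 0,\ \sum_i u_i \le k \,\}$ count these tuples. When $\min_i u_i \ge 1$, the substitution $u_i = 1 + w_i$ is a bijection onto the tuples $w \in \Z_{\ge 0}^n$ with $\sum_i w_i - \min_i w_i \le k - (n-1)$, of which there are exactly $M_n(k-(n-1))$. This gives the recursion
\[
    M_n(k) = A_n(k) + M_n(k-(n-1)) \qquad\text{for } k \ge 0,
\]
with the convention $M_n(j) = 0$ for $j < 0$. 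A one-line inclusion--exclusion --- all tuples with $\sum_i u_i \le k$ minus those with every coordinate at least $1$ --- evaluates $A_n(k) = \binom{k+n}{n} - \binom{k}{n}$.

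Finally I would verify by strong induction on $k \ge 0$ that $M_n(k) = R(k) \deftobe \binom{k+n}{n} + F_{n-1}(k)$. For $0 \le k \le n-2$ the recursion gives $M_n(k) = A_n(k) = \binom{k+n}{n}$ (as $\binom{k}{n} = 0$), while $F_{n-1}(k) = 0$ since the only $j \in [0,k]_{\Z}$ with $j \equiv k \mod{n-1}$ is $j = k$, for which $\binom{k}{n-1} = 0$; so $M_n(k) = \binom{k+n}{n} = R(k)$. For $k \ge n-1$, the recursion together with the inductive hypothesis at $k-(n-1) \ge 0$ reduces the claim to checking $R(k) = A_n(k) + R(k-(n-1))$; cancelling $\binom{k+n}{n}$ and using $\binom{k-(n-1)+n}{n} = \binom{k+1}{n}$, this becomes
\[
    F_{n-1}(k) - F_{n-1}(k-(n-1)) = \binom{k+1}{n} - \binom{k}{n} = \binom{k}{n-1}.
\]
The right-hand equality is Pascal's rule, and the left-hand side equals $\binom{k}{n-1}$ because the two sums defining $F_{n-1}(k)$ and $F_{n-1}(k-(n-1))$ range over the same residue class modulo $n-1$ and differ in exactly the single term $j = k$.

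I expect the only mildly delicate bookkeeping to be keeping the boundary range $0 \le k \le n-2$ consistent (equivalently, the negative-argument conventions for $M_n$ and $F_{n-1}$), together with the observation at the outset that $\max(L)$ is attained on some edge $a_i b_i$ rather than on an edge at $x$ or $y$ --- which is precisely where $n \ge 2$ enters. Neither is a genuine obstacle; the substance of the argument is the step-$(n-1)$ recursion and the single-term telescoping of $F_{n-1}$.
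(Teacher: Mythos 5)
Your proof is correct, but it takes a genuinely different route from the paper. Both proofs start from the same parametrization of magic labelings of $G_n$ by tuples $u \in \Z_{\ge 0}^n$ with $\max(L) = \sum_j u_j - \min_\ell u_\ell$ (the paper takes this from the proof of \cref{lem: G_n magic labeling generators}, and your note that $n \ge 2$ is exactly what guarantees the maximum is attained on an edge $a_ib_i$ is a correct and useful observation that the paper leaves implicit). After that the approaches diverge. The paper conditions on the ``excess'' $i = \sum_j u_j - k$: the case $i \le 0$ gives $\binom{k+n}{n}$, and for each $i \ge 1$ the constraint $\sum_j u_j - u_\ell \le k$ forces $u_\ell \ge i$ for all $\ell$, so a shift $u_\ell \mapsto u_\ell - i$ gives a stars-and-bars count $\binom{k - (i-1)(n-1)}{n-1}$; summing over $i$ produces $F_{n-1}(k)$ directly. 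You instead condition on whether $\min_\ell u_\ell = 0$ or $\ge 1$, which yields the step-$(n-1)$ recursion $M_n(k) = \bigl(\binom{k+n}{n} - \binom{k}{n}\bigr) + M_n(k-(n-1))$, and then verify by induction that the claimed right-hand side satisfies the same recursion, the key identity being the single-term telescoping $F_{n-1}(k) - F_{n-1}(k-(n-1)) = \binom{k}{n-1}$ combined with Pascal's rule. The paper's route is constructive --- it \emph{derives} the closed form $\binom{k+n}{n} + F_{n-1}(k)$ without needing to guess it --- while yours is a verification that requires knowing the answer in advance, though it has the compensating virtue of isolating the step-$(n-1)$ recursion, which makes the appearance of the period $n-1$ in the quasipolynomial especially transparent.
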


\begin{proof}
    As in the proof of \autoref{lem: G_n magic labeling generators},
    each magic labeling $L$ of $G_{n}$ is determined by the labels
    $u_{j}$ that $L$ assigns to the edges $xa_{j}$ for $1 \le j \le
    n$.  Moreover, the labeling $L$ is a $k$-labeling if and only if
    the maximum label $\sum_{j=1}^n u_j - \min_{1 \leq \ell \leq n}
    u_\ell$ is at most $k$.  Thus, we can directly calculate $M_n(k)$
    as follows:
    \begingroup%
    \allowdisplaybreaks%
    \begin{align*}
        M_n(k) 
        &=%
        \#\setof{(u_1,\dotsc, u_n) \in \Z_{\ge 0}^n \sst
        \text{$\sum_{j=1}^n u_j - u_\ell \le k$ for all $\ell$} }
        \\
        &=%
        \#\setof{(u_1,\dotsc, u_n) \in \Z_{\ge 0}^n \sst
        \text{$\sum_{j=1}^n u_j \le k$} } \\
        &\qquad+%
        \sum_{i \ge 1}\#\setof{(u_1,\dotsc, u_n) \in \Z_{\ge 0}^n
        \sst \text{$\sum_{j=1}^n u_j = k + i$ and $u_\ell \ge i$
        for all $\ell$} } \\
        &=%
        \binom{k+n}{n}
        +%
        \sum_{i \ge 1}
        \binom{k - (i-1)(n-1)}{n-1} \\
        &=%
        \binom{k+n}{n}
        +
        \sum_{i \ge 0}
        \binom{k - i(n-1)}{n-1}. \qedhere
    \end{align*}
    \endgroup%
\end{proof}

\subsection{Minimum quasiperiods and finite differences}%
\label{subsec: tools for bounding}

In this subsection, we review a result of Sam and Woods
\cite{SamWoo2010} and observe that, as a direct consequence of their
result, the minimum quasiperiod of a quasipolynomial~$F$ equals the
minimum quasiperiod of the \emph{first difference} of~$F$.  We will
then use this fact in the next subsection to show that the minimum
quasiperiod of $F_n$ is $n$.

The \defing{difference operator} $F \mapsto \fd F$ is defined as
follows.  For any complex-valued function $F$ defined on $\Z$, or on
any interval $[a,\infty) \subset \Z$, the \defing{first difference} of
$F$ is the function $\fd F$ defined on the same domain by
\begin{equation*}
    \fd F(t) \deftobe F(t+1) - F(t).
\end{equation*}
For $i \in \Z_{\ge 2}$, the \defing{$i$\textsuperscript{th}
difference} of $F$ is defined by ${\fd^{i} F \deftobe \fd (\fd^{i-1}
F)}$.  The operator $\fd$ satisfies an analogue of the fundamental
theorem of calculus: If $f$ and $F$ are functions defined on $[a,
\infty)$, then $\fd \sum_{x = a}^{t-1} f(x) = f(t)$ and
${\sum_{x=a}^{t-1} \fd F\,(x) = F(t) - F(a)}$ for all $t > a$.  This
operator thus gives rise to a rich ``calculus of finite differences''
\cite{GKP1994}.  Sam and Woods use this calculus in \cite{SamWoo2010}
to give elementary proofs of several foundational results in Ehrhart
theory.
   
We now show that the difference operator preserves the minimum 
quasi\-period of quasipolynomials:
\begin{equation}\label{eq: difference has same or smaller min period}
    \mqp(\fd F) = \mqp(F).
\end{equation}
The easy half of this equation is the inequality $\mqp(\fd F) \le
\mqp(F)$.  For, let ${\phi_{1}, \dotsc, \phi_{s} \in \C[x]}$ be the
constituents of $F$.  Then, for each $r \in [s]$ and $t \equiv r
\mod{s}$, we have that $\fd F(t) = \phi_{r+1}(t+1) - \phi_{r}(t)$
(indices modulo $s$), which is a polynomial function of~$t$.  Thus,
$\fd F$ is a quasipolynomial, and~$s$ is a quasiperiod $\fd F$.

To complete the proof of Equation~\eqref{eq: difference has same or
smaller min period}, it remains to prove that $\mqp(F) \le \mqp(\fd
F)$.  This inequality follows from
\cite[Lemma~2.1]{SamWoo2010}, a special case of which%
\footnote{%
    The authors of \cite{SamWoo2010} consider the more general case in
    which the upper bound of summation in the definition of $F(t)$ is
    itself a quasipolynomial function of $t$ of the form $t \mapsto
    \floor{at/b}$ for some $a, b \in \Z$.
} %
is the following.

\begin{theorem}[{\hspace{1sp}\cite[Lemma~2.1]{SamWoo2010}}]
\label{thm: sam and woods lemma}%
    Let $f(t) = \sum_{i=0}^{d} c_{i}(t) \, t^{i}$ be a
    quasipolynomial, where $c_{i}$ is a periodic function with minimum
    period $s_{i}$ for $0 \le i \le d$.  Let $F(t) \deftobe \sum_{x =
    0}^{t-1} f(x)$ for $t \ge 1$.  Then $F(t) = \sum_{i=0}^{d+1}
    C_{i}(t) \, t^{i}$ for some periodic functions $C_{0}, C_{1},
    \dotsc, C_{d+1}$ such that the minimum period of $C_{i}$ divides
    $\lcm\setof{s_{i}, s_{i+1}, \dotsc, s_{d}}$ for $0 \le i \le d$,
    and $C_{d+1}$ is constant.%
    \footnote{%
        It may happen that $C_{d+1} = 0$, as for example when $f(t)
        \deftobe (-1)^{t}$.
        In general, $C_{d+1} = 0$ if and only if $\sum_{x=1}^{s_{d}}
        c_{d} (x) = 0$.  Thus, the ``anti-difference'' operator does
        not increase the degree of every quasipolynomial.
    }%
\end{theorem}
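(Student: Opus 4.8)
The plan is to reduce to a single-monomial statement and then prove that statement by a ``block decomposition'' of the summation range $\setof{0,1,\dotsc,t-1}$ into full periods plus a short tail.

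\emph{Reduction.} Since $F(t) = \sum_{x=0}^{t-1} f(x) = \sum_{i=0}^{d} G_i(t)$ where $G_i(t) \deftobe \sum_{x=0}^{t-1} c_i(x)\,x^i$, linearity reduces everything to the following claim: for a periodic function $c$ with period $s$ and an integer $m \ge 0$, the function $G(t) \deftobe \sum_{x=0}^{t-1} c(x)\,x^m$ can be written as $\sum_{k=0}^{m+1} D_k(t)\,t^k$ with $D_k$ periodic of period dividing $s$ for $0 \le k \le m$, and $D_{m+1}$ constant. Granting this, I would write $G_i(t) = \sum_{k=0}^{i+1} D_{i,k}(t)\,t^k$ and collect powers of $t$ in $F = \sum_i G_i$, obtaining $F(t) = \sum_{k=0}^{d+1} C_k(t)\,t^k$ with $C_k = \sum_{i=\max(k-1,0)}^{d} D_{i,k}$. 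When $k = d+1$ only $i=d$ contributes and $C_{d+1} = D_{d,d+1}$ is constant; when $k \le d$, the single term $D_{k-1,k}$ (if present) is constant while each remaining $D_{i,k}$ with $k \le i \le d$ has period dividing $s_i$, so $C_k$ has period dividing $\lcm\setof{s_k, \dotsc, s_d}$ --- exactly the asserted bound.

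\emph{The block decomposition.} For the claim, I would write $t = qs + r$ with $0 \le r < s$ and split
\[
    G(t) \;=\; \sum_{\ell=0}^{q-1} \sum_{u=0}^{s-1} c(u)\,(\ell s + u)^m \;+\; \sum_{u=0}^{r-1} c(u)\,(qs+u)^m .
\]
Expanding $(\ell s + u)^m$ with the binomial theorem shows that $p(\ell) \deftobe \sum_{u=0}^{s-1} c(u)(\ell s + u)^m$ is a polynomial in $\ell$ of degree at most $m$ with leading coefficient $s^m \sum_{u=0}^{s-1} c(u)$; hence, by the standard formula for $\sum_{\ell=0}^{q-1} \ell^a$, the first double sum is a polynomial $R(q)$ of degree at most $m+1$ whose degree-$(m+1)$ coefficient is $\tfrac{1}{m+1}s^m\sum_u c(u)$, a constant independent of $r$. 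The tail is, for each fixed $r$, a polynomial $H_r(q)$ of degree at most $m$. Now I would substitute $q = (t-r)/s$ (valid on the residue class $t \equiv r \pmod{s}$) and expand $\bigl((t-r)/s\bigr)^j = s^{-j}\sum_{a=0}^{j}\binom{j}{a}(-r)^{j-a}\,t^a$: each coefficient of a power of $t$ becomes a function of $r$, hence a function of $t$ periodic with period dividing $s$, and the coefficient of $t^{m+1}$ arises only from the degree-$(m+1)$ term of $R$ and therefore equals $s^{-(m+1)}$ times the constant top coefficient of $R$. This proves the claim.

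\emph{Main obstacle.} The calculus of finite differences behind this is routine; the care is all in the bookkeeping that keeps $C_{d+1}$ \emph{constant} rather than merely periodic, and keeps the period of $C_k$ down to $\lcm\setof{s_k,\dotsc,s_d}$ rather than a larger common multiple. I expect the one genuinely error-prone point to be verifying that the degree-$(m+1)$ coefficient emerging after the substitution $q = (t-r)/s$ is truly independent of $r$ --- this is precisely what underwrites the remark that the anti-difference operator does not raise the degree when $\sum_{x=1}^{s_d} c_d(x) = 0$ --- together with correctly isolating the constant ``carry'' term $D_{k-1,k}$ from the $s_i$-periodic terms in the reduction step.
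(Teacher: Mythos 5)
The paper itself contains no proof of this statement; it quotes (a special case of) Lemma~2.1 of Sam and Woods \cite{SamWoo2010}, as the footnote to the statement notes, so there is no in-paper argument to compare against. Your argument is, however, a complete and correct proof of the stated special case. The linearity reduction to a single-monomial summand $G(t) = \sum_{x=0}^{t-1}c(x)\,x^m$ is handled correctly, including the bookkeeping that sends the constant top coefficient $D_{k-1,k}$ of $G_{k-1}$ up into the degree-$k$ coefficient $C_k$ of $F$ while preserving the period bound $\lcm\setof{s_k,\dotsc,s_d}$ (the constant contributes nothing to the $\lcm$, and each $D_{i,k}$ with $i\ge k$ has period dividing $s_i$). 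The block decomposition $t = qs+r$ then does exactly what it must: expanding $(\ell s + u)^m$ and summing over $\ell$ isolates the degree-$(m+1)$ coefficient of $G$ as $\tfrac{1}{(m+1)s}\sum_{u=0}^{s-1}c(u)$, manifestly independent of $r$, while every lower-degree coefficient depends on $t$ only through $r = t \bmod s$ and hence is periodic with period dividing $s$. Your computation also verifies, en passant, the footnote's remark that $C_{d+1}=0$ exactly when $\sum_{x=1}^{s_d}c_d(x)=0$. The Sam--Woods lemma is slightly more general in that the upper summation limit may be $\floor{at/b}$ rather than $t$, which introduces an additional modulus $b$ into the residue-class arithmetic but does not change the structure of the argument; your proof recovers precisely the case the paper invokes.
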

   
Equation~\eqref{eq: difference has same or smaller min period} is now
a straightforward corollary.
   
\begin{corollary}\label{cor: difference has same min period}
    Let $F \maps \Z \to \C$ be a quasipolynomial.  Then $\fd F$ is
    also a quasipolynomial, and $\mqp(\fd F) = \mqp(F).$
\end{corollary}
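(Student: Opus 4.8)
The plan is to prove the two divisibilities $\mqp(\fd F) \mid \mqp(F)$ and $\mqp(F) \mid \mqp(\fd F)$; since both minimum quasiperiods are positive integers, together these give the claimed equality. The first divisibility --- together with the assertion that $\fd F$ is a quasipolynomial at all --- is exactly the ``easy half'' recalled in the paragraph following Equation~\eqref{eq: difference has same or smaller min period}: writing $\phi_1, \dotsc, \phi_s \in \C[x]$ for the constituents of $F$, where $s = \mqp(F)$, we have $\fd F(t) = \phi_{r+1}(t+1) - \phi_r(t)$ (indices modulo $s$) whenever $t \equiv r \mod{s}$, which is a polynomial function of $t$; so $\fd F$ is a quasipolynomial admitting $s$ as a quasiperiod, whence $\mqp(\fd F) \mid s = \mqp(F)$.

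For the reverse divisibility I would feed $\fd F$ into \autoref{thm: sam and woods lemma}. Write $\fd F(t) = \sum_{i=0}^d c_i(t)\, t^i$ with $c_i$ periodic of minimum period $s_i$, so that $\mqp(\fd F) = \lcm\setof{s_0, \dotsc, s_d}$, and set $F^{*}(t) \deftobe \sum_{x=0}^{t-1} \fd F(x)$ for $t \ge 1$. The discrete fundamental theorem of calculus recalled above gives $F^{*}(t) = F(t) - F(0)$ for all integers $t \ge 1$. Applying \autoref{thm: sam and woods lemma} with $f \deftobe \fd F$, we obtain that $F^{*}$ is a quasipolynomial $\sum_{i=0}^{d+1} C_i(t)\, t^i$ whose coefficient function $C_i$ has minimum period dividing $\lcm\setof{s_i, \dotsc, s_d}$ for $0 \le i \le d$, with $C_{d+1}$ constant. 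Hence the minimum quasiperiod of $F^{*}$, which equals the least common multiple of the minimum periods of the $C_i$, divides $\lcm\setof{s_0, \dotsc, s_d} = \mqp(\fd F)$. Finally, $F$ and $F^{*}$ differ by the additive constant $F(0)$ for all integers $t \ge 1$, so they have the same constituents up to that constant, and hence the same minimum quasiperiod; thus $\mqp(F) = \mqp(F^{*}) \mid \mqp(\fd F)$.

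I do not expect a genuine obstacle here, since all the substance is contained in \autoref{thm: sam and woods lemma}. The only thing to watch is the mild bookkeeping caused by the fact that the summation defining $F^{*}$ --- and hence the conclusion of \autoref{thm: sam and woods lemma} --- is available only for $t \ge 1$, whereas $F$ is defined on all of $\Z$. This is harmless: the minimum quasiperiod of a quasipolynomial is determined by its constituents, which are in turn determined by its restriction to any infinite ray of integers, so the identity $F = F^{*} + F(0)$ for $t \ge 1$ suffices to identify $\mqp(F)$ as above.
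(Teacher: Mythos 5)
Your proof is correct and follows essentially the same route as the paper's: the easy divisibility is taken verbatim from the paragraph after Equation~\eqref{eq: difference has same or smaller min period}, and the reverse divisibility is obtained by applying \autoref{thm: sam and woods lemma} to $\fd F$, noting that the resulting anti-difference agrees with $F$ up to the additive constant $F(0)$ (the paper calls your $F^{*}$ by the name $H$). Your extra sentence about the domain $t \ge 1$ versus $\Z$ is a reasonable bit of care that the paper leaves implicit, but it does not change the substance of the argument.
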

   
\begin{proof}
    From the argument immediately following Equation~\eqref{eq:
    difference has same or smaller min period}, it remains only to
    prove that $\mqp(F) \le \mqp(\fd F)$.  Let $$H(t) \deftobe F(t) -
    F(0) = \sum_{x = 0}^{t-1} \fd F(x).$$ Write $H(t) \betodef
    \sum_{i=0}^{d+1} C_{i}(t) \, t^{i}$, and let~$s'_{i}$ be the
    minimum period of $C_{i}$ for ${0 \le i \le d+1}$.  Put $\fd F(t)
    \betodef \sum_{i=0}^{d} c_{i}(t)\, t^{i}$ and let $s_{i}$ be the
    minimum period of $c_{i}$ for $0 \le i \le d$.  Then, by
    \autoref{thm: sam and woods lemma},
    \begin{equation*}
        \mqp(F) 
        =
        \mqp(H) 
        =%
        \lcm\setof{s'_{0}, \dotsc, s'_{d}} 
        \divides%
        \lcm\setof{s_{0}, \dotsc, s_{d}} 
        =%
        \mqp(\fd F).
    \end{equation*}
    In particular, $\mqp(F) \le \mqp(\fd F)$.
\end{proof}

\subsection{%
    Proof that \texorpdfstring{$F_{n}$}{\uniF\unisubn} has minimum
    quasiperiod \texorpdfstring{$n$}{\unin}
}\label{subsec: minimum quasiperiod}%

In this subsection, we apply the result of \autoref{subsec: tools for
bounding} to show that $F_{n}$ has minimum quasiperiod $n$, thereby
completing the proof of \autoref{thm: main theorem}.  Thus far,
$F_{n}(k)$ has been defined only for nonnegative $k$.  Henceforth, we
also write~$F_{n}$ for the unique quasipolynomial extension of $F_{n}$
to all of $\Z$.

\begin{lemma}\label{lem:nthDifferenceOfFn}
    Let $n \ge i \in \Z_{\ge 0}$.  The $i$\textsuperscript{th}
    difference of $F_{n}$ satisfies
    \begin{equation} \label{eq:nthDifferenceOfFn-1}
        \fd^{i} F_{n} (t)
        =
        \sum_{\substack{j \in [0,t]_{\Z} \sst \\ j \equiv t \mod{n}}}
        \binom{j}{n-i}
        \qquad
        \text{for $t \in \Z_{\ge 0}$}.
    \end{equation}
    In particular,
    \begin{equation*}
        \fd^{n} F_{n} (t)
        =
        \floor{\frac{t}{n}} + 1
        \qquad
        \text{for $t \in \Z$},
    \end{equation*}
    and so $\fd^{n} F_{n}$ is a quasipolynomial with minimum
    quasiperiod~$n$.
\end{lemma}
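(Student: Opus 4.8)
The plan is to establish \eqref{eq:nthDifferenceOfFn-1} by induction on $i$, deduce the formula for $\fd^{n}F_{n}$ on $\Z_{\ge 0}$, extend it to all of $\Z$ using uniqueness of the quasipolynomial extension, and then read off the minimum quasiperiod from the coefficient-function description of quasipolynomials recalled in \autoref{sec: prelims}.

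For the induction, write $S_{m}(t)$ for the sum on the right-hand side of \eqref{eq:nthDifferenceOfFn-1} with $n-i$ replaced by an arbitrary $m \ge 0$, so that the assertion becomes $\fd^{i}F_{n} = S_{n-i}$ on $\Z_{\ge 0}$; the base case $i = 0$ is the definition of $F_{n}$. The heart of the inductive step is the single identity $\fd S_{m}(t) = S_{m-1}(t)$, valid for $m \ge 1$ and $t \in \Z_{\ge 0}$. To prove it, let $A(t) \deftobe \setof{j \in [0,t]_{\Z} \sst j \equiv t \mod{n}}$ be the index set of $S_{m}(t)$, and observe the elementary fact that $A(t+1) = \setof{j+1 \sst j \in A(t)}$ when $t \not\equiv n-1 \mod{n}$, whereas $A(t+1) = \setof{j+1 \sst j \in A(t)} \cup \setof{0}$ in the ``wrap-around'' case $t \equiv n-1 \mod{n}$, where the residue class resets and picks up the new least element $j = 0$. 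Hence $S_{m}(t+1) = \sum_{j \in A(t)} \binom{j+1}{m} + \varepsilon\binom{0}{m}$, where $\varepsilon \in \setof{0,1}$ equals $1$ exactly when $t \equiv n-1 \mod{n}$; subtracting $S_{m}(t)$ and applying Pascal's rule $\binom{j+1}{m} - \binom{j}{m} = \binom{j}{m-1}$ termwise yields $\fd S_{m}(t) = S_{m-1}(t) + \varepsilon\binom{0}{m}$. Since the induction steps only from $i$ to $i+1$ for $0 \le i \le n-1$, we always have $m = n-i \ge 1$, so $\binom{0}{m} = 0$ and the stray term disappears, completing the step.

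Taking $i = n$, every summand becomes $\binom{j}{0} = 1$, so $\fd^{n}F_{n}(t) = \card{A(t)} = \floor{t/n} + 1$ for all $t \in \Z_{\ge 0}$. Both sides extend to quasipolynomials on all of $\Z$ — the left side because $\fd$ preserves quasipolynomiality (\autoref{cor: difference has same min period}) and $F_{n}$ has been fixed to be its quasipolynomial extension, and the right side because on the residue class $r$ modulo $n$ the function $t \mapsto \floor{t/n}+1$ agrees with the polynomial $\tfrac{1}{n}t + \bigparens{1 - \tfrac{r}{n}}$ — and two quasipolynomials that agree on all of $\Z_{\ge 0}$, hence on infinitely many points of each residue class, coincide. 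Therefore $\fd^{n}F_{n}(t) = \floor{t/n}+1$ for every $t \in \Z$. Writing this quasipolynomial as $\sum_{i} c_{i}(t)\,t^{i}$, we have $c_{1} \equiv \tfrac{1}{n}$, which has period $1$, and $c_{0}(t) = 1 - \tfrac{(t \bmod n)}{n}$, whose $n$ values $1, 1-\tfrac1n, \dotsc, 1-\tfrac{n-1}{n}$ are pairwise distinct, so $c_{0}$ has minimum period exactly $n$; thus $\mqp(\fd^{n}F_{n}) = \lcm\setof{1, n} = n$, as claimed.

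The only computations involved are Pascal's rule and the description of $A(t+1)$, both routine; the one point that needs care is the wrap-around term $\varepsilon\binom{0}{m}$, which is precisely why \eqref{eq:nthDifferenceOfFn-1} is asserted only for $0 \le i \le n$ and why $\fd^{n}F_{n}$ is a genuine quasipolynomial of quasiperiod $n$ rather than a polynomial.
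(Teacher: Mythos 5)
Your proposal is correct and takes essentially the same approach as the paper: induction on $i$ via Pascal's rule, with the wrap-around boundary term $\binom{0}{n-i}$ vanishing precisely because $n - i \ge 1$, and then for $i = n$ counting the elements of the index set to obtain $\floor{t/n} + 1$. Your description of $A(t+1)$ as the shift of $A(t)$ (plus $\{0\}$ in the wrap-around case) is just a more explicit phrasing of the paper's reindexing $j \mapsto j-1$; you also spell out the extension to all of $\Z$ and the computation of $\mqp(\fd^{n}F_{n})$ in slightly more detail than the paper, but the underlying argument is identical.
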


\begin{proof}
    The claim is trivial when $i = 0$.  Proceeding by induction on
    $i$, we find that, for $n \ge i + 1$ and $t \in \Z_{\ge 0}$,
    \begingroup%
    \allowdisplaybreaks%
    \begin{align*}
        \fd^{i+1} F_{n} (t) 
        &=
        \fd^{i} F_{n} (t+1) - \fd^{i} F_{n} (t) \\
        &=%
        \sum_{\substack{j \in [0, t+1]_{\Z} \sst \\ j \equiv t + 1
        \mod{n}}} \binom{j}{n-i}
        -%
        \sum_{\substack{j \in [0, t]_{\Z} \sst \\ j \equiv t \mod{n}}}
        \binom{j}{n-i} \\
        &=%
        \sum_{\substack{j \in [-1, t]_{\Z} \sst \\ j \equiv t
        \mod{n}}} \binom{j + 1}{n-i}
        -%
        \sum_{\substack{j \in [0, t]_{\Z} \sst \\ j \equiv t \mod{n}}}
        \binom{j}{n-i} \\
        &=%
        \sum_{\substack{j \in [0, t]_{\Z} \sst \\ j \equiv t \mod{n}}}
        \binom{j}{n-(i+1)}.
    \end{align*}
    \endgroup%
    (The condition that $n \ge i + 1$ is used to eliminate the $j =
    -1$ term in the first sum on the right-hand side of the third
    equation.)  Thus, \cref{eq:nthDifferenceOfFn-1} is proved.  In
    particular, for $t \in \Z_{\ge 0}$,
    \begin{equation*}
        \fd^{n} F_{n} (t)
        =
        \#
        \setof{j \in [0,t]_{\Z} \sst j \equiv t \mod{n}}
        =
        \floor{\frac{t}{n}} + 1.
    \end{equation*}
    Therefore, $\fd^{n} F_{n} (t) = \floor{\frac{t}{n}} + 1$ for all
    $t \in \Z$.
\end{proof}

\begin{corollary}\label{cor: quasiperiod of Fn}
   The minimum quasiperiod of $F_{n}$ is $n$.
\end{corollary}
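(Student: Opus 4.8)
The plan is to deduce the corollary from \autoref{lem:nthDifferenceOfFn} together with \autoref{cor: difference has same min period}. The key observation is that $\mqp$ is preserved by the difference operator, so applying $\fd$ a total of $n$ times does not change the minimum quasiperiod: iterating \autoref{cor: difference has same min period} gives $\mqp(F_n) = \mqp(\fd F_n) = \mqp(\fd^2 F_n) = \cdots = \mqp(\fd^n F_n)$. Thus it suffices to identify $\mqp(\fd^n F_n)$, and \autoref{lem:nthDifferenceOfFn} already tells us that $\fd^n F_n(t) = \floor{t/n} + 1$ for all $t \in \Z$.

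So the remaining task is purely to check that the quasipolynomial $t \mapsto \floor{t/n} + 1$ has minimum quasiperiod exactly $n$. First I would note that it \emph{is} a quasipolynomial of degree $1$: writing $t = qn + r$ with $0 \le r < n$, we have $\floor{t/n} + 1 = (t - r)/n + 1$, so the constituent on the residue class $r$ is the degree-$1$ polynomial $\phi_r(t) = \tfrac{1}{n} t + \bigl(1 - \tfrac{r}{n}\bigr)$. Hence $n$ is a quasiperiod. For minimality, I would use the coefficient-function description of $\mqp$ recalled in the preliminaries: here the degree-$1$ coefficient function is the constant $1/n$ (period $1$), while the degree-$0$ coefficient function is $r \mapsto 1 - r/n$, which takes $n$ distinct values $1, 1 - \tfrac1n, \dotsc, \tfrac1n$ as $r$ ranges over $0, 1, \dotsc, n-1$, so its minimum period is exactly $n$. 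Therefore $\mqp(\fd^n F_n) = \lcm\{1, n\} = n$. (This is essentially already asserted in the last sentence of \autoref{lem:nthDifferenceOfFn}, so I could simply cite it.)

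Combining the two steps yields $\mqp(F_n) = \mqp(\fd^n F_n) = n$, which is the claim. One small point to be careful about: \autoref{cor: difference has same min period} is stated for quasipolynomials $F \maps \Z \to \C$, so I should invoke the convention, fixed just before \autoref{lem:nthDifferenceOfFn}, that $F_n$ denotes the unique quasipolynomial extension to all of $\Z$ — and likewise \autoref{lem:nthDifferenceOfFn} records that $\fd^n F_n(t) = \floor{t/n} + 1$ holds on all of $\Z$, not merely on $\Z_{\ge 0}$, so that the identification of $\fd^n F_n$ with an honest quasipolynomial is legitimate. There is no real obstacle here; the corollary is a direct packaging of the lemma and \autoref{cor: difference has same min period}, and the only thing to "prove" is the elementary period computation for $\floor{t/n}+1$, which the lemma statement has effectively already done.

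\begin{proof}
    By the convention fixed above, we regard $F_n$ as a quasipolynomial on all of $\Z$.  Applying \autoref{cor: difference has same min period} $n$ times gives
    \begin{equation*}
        \mqp(F_n) = \mqp(\fd F_n) = \mqp(\fd^2 F_n) = \dotsb = \mqp(\fd^n F_n).
    \end{equation*}
    By \autoref{lem:nthDifferenceOfFn}, $\fd^n F_n(t) = \floor{t/n} + 1$ for all $t \in \Z$.  Writing $t = qn + r$ with $q \in \Z$ and $r \in \setof{0, 1, \dotsc, n-1}$, we have $\floor{t/n} + 1 = \tfrac{1}{n} t + \bigl(1 - \tfrac{r}{n}\bigr)$, so $\fd^n F_n$ is a quasipolynomial of degree $1$ with constant degree-$1$ coefficient function $1/n$ and degree-$0$ coefficient function $r \mapsto 1 - r/n$.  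The latter takes the $n$ pairwise distinct values $1, 1 - \tfrac{1}{n}, \dotsc, \tfrac{1}{n}$ on the residues $0, 1, \dotsc, n-1$, hence has minimum period exactly $n$.  Therefore $\mqp(\fd^n F_n) = \lcm\setof{1, n} = n$, and so $\mqp(F_n) = n$.
\end{proof}
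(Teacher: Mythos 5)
Your proof is correct and follows the same route as the paper: iterate \autoref{cor: difference has same min period} to reduce to computing $\mqp(\fd^n F_n)$, then read off $\mqp(\fd^n F_n)=n$ from \autoref{lem:nthDifferenceOfFn}. You spell out the easy period computation for $\floor{t/n}+1$ that the lemma already asserts, but otherwise it is the same argument.
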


\begin{proof}
    By \autoref{lem:nthDifferenceOfFn}, $\fd^{n} F_{n}$ has minimum
    quasiperiod $n$.  Therefore, by \autoref{cor: difference has same
    min period}, $F_{n}$ itself has minimum quasiperiod $n$.
\end{proof}

Indeed, it follows from \autoref{thm: sam and woods lemma} that all
coefficient functions of $F_n$ are constant, except for the degree-$0$
coefficient function, which has minimum period $n$.  For, in the
notation of \autoref{thm: sam and woods lemma}, $f(t) \deftobe
\floor{\frac{t}{n}} + 1 = \frac{1}{n}t + c_{0}(t)$, where $c_{0}(t) =
-r_{n}(t)/n + 1$ and $r_{n}(t)$ is the remainder of $t$ modulo $n$.
Thus, $s_{0} = n$ and $s_{i} = 1$ for $i \ge 1$.  Any function $F$
such that $\fd F = f$ differs from $t \mapsto \sum_{x = 0}^{t-1} f(x)$
by a constant and so the minimum quasiperiod of the coefficient
function $C_{i}$ of $F$ divides $\lcm\setof{s_{i}, \dotsc, s_{d}} = 1$
for all $i$ such that $d \ge i \ge 1$.  That is, $C_{i}$ is
constant for all $i \ge 1$.  By induction, every coefficient function
of $F_{n}$, except for the degree-$0$ coefficient function, is
constant.

We are now prepared to prove our main result.

\begin{proof}[Proof of \autoref{thm: main theorem}]
    Let $G_n$ be the graph from \autoref{defn: G_n}.  The element
    $(\Lstar,n-1) \in \Phi(P_{G_n})$ from \autoref{defn: L labeling}
    is completely fundamental in $\Phi(P_{G_n})$, as shown in
    \autoref{thm: vertices of PGn}.  Hence $P_{G_n}$ has denominator
    $n-1$, as desired.

    By \autoref{prop: Mn to Fn-1}, the magic labeling quasipolynomial
    $M_n(k)$ is a sum of a polynomial and the quasipolynomial
    $F_{n-1}$.  We have by \autoref{cor: quasiperiod of Fn} that
    $F_{n-1}$ has minimum quasiperiod $n-1$.  Therefore $M_n(k)$ also
    has minimum quasiperiod $n-1$, thus proving the second statement.
\end{proof}

\section{%
    Graphs with Small Magic-Labeling Quasiperiods
}\label{sec: small quasiperiod}%

Though \autoref{thm: main theorem} demonstrates that the
quasipolynomial $M_G(k)$ can have arbitrarily large minimum
quasiperiod, there are still large families of graphs for which we can
give a uniform bound on the minimum quasiperiod.  In this section, we
show that for a large family of graphs, including any graph with a
leaf, the minimum quasiperiod of this quasipolynomial is at most $2$.

\begin{prop}\label{prop: common edge upper bound}
    Suppose there is an edge $e$ in the graph $G$ that attains the
    maximum label in every magic labeling of $G$ of index $2$.  Then
    $M_G(k)$ is a quasipolynomial of quasiperiod at most $2$.  If $G$
    is furthermore bipartite, then $M_G(k)$ is a polynomial.
\end{prop}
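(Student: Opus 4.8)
The plan is to show that, under the hypothesis, $P_G$ is a \emph{truncated cone} --- the intersection of $C_G$ with a single coordinate halfspace, i.e.\ a pyramid with apex $\vec 0$ --- and then read off its denominator from Stanley's bound on $\den(Q_G)$. \textbf{The first step is to globalize the hypothesis}: I claim that $e$ attains the maximum label not merely in every index-$2$ magic labeling but in \emph{every} magic labeling of $G$. Indeed, by \autoref{lem: stanley cf index}, every $w \in \vertx(Q_G)$ has $\den(w) \in \{1,2\}$, so $2w$ is an \emph{integral} magic labeling of index $2$; the hypothesis then gives $(2w)(e) = \max(2w)$, i.e.\ $w(e) \ge w(f)$ for every edge $f$. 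This linear inequality holds at every vertex of $Q_G$, hence on all of $Q_G$ by convexity, and hence on all of $C_G$ by positive scaling (every nonzero $L \in C_G$ equals $s_L(v) \cdot (L/s_L(v))$ with $L/s_L(v) \in Q_G$ and $s_L(v) > 0$). So $\max L = L(e)$ for all $L \in C_G$; in particular the maximum-label functional is linear on $C_G$ and strictly positive on $C_G \setminus \{\vec 0\}$.

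\textbf{Next I would identify the shape of $P_G$.} Since $x(f) \le x(e)$ on $C_G$, every constraint $x(f) \le 1$ is implied by $x(e) \le 1$, so $P_G = C_G \cap [0,1]^E = C_G \cap \{x : x(e) \le 1\}$; and as $x \mapsto x(e)$ is positive on $C_G \setminus \{\vec 0\}$, this set is bounded. Writing $B \deftobe C_G \cap \{x : x(e) = 1\}$, one checks directly that $P_G = \operatorname{conv}(\{\vec 0\} \cup B)$, a pyramid over the base $B$ (with $\vec 0 \notin \operatorname{aff}(B)$), so $\vertx(P_G) = \{\vec 0\} \cup \vertx(B)$. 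Because $Q_G = \{x \in C_G : s_x(v) = 1 \text{ for all } v\}$ is a transverse slice of the pointed cone $C_G$, the extreme rays of $C_G$ correspond to the vertices of $Q_G$; intersecting each extreme ray with $\{x(e) = 1\}$ shows $\vertx(B) = \{\, w/w(e) : w \in \vertx(Q_G)\,\}$ (legitimate since $w(e) = \max w > 0$ for $w \ne \vec 0$).

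\textbf{Finally I would bound the denominators of these vertices.} Fix $w \in \vertx(Q_G)$. By \autoref{lem: stanley cf index}, $\den(w) \le 2$, so $2w \in \Z^E$; since also $w \in Q_G \subset [0,1]^E$, every coordinate of $w$ lies in $\{0, \tfrac12, 1\}$. By the first step $w(e) = \max w$, which is positive, so $w(e) \in \{\tfrac12, 1\}$. If $w(e) = 1$ then $w/w(e) = w$ has denominator $\le 2$; if $w(e) = \tfrac12$ then $w/w(e) = 2w$ is integral. Hence every vertex of $P_G$ has denominator at most $2$, so $\den(P_G) \le 2$ by \autoref{cor: quasiperiod ub}, and therefore $\mqp(M_G)$, which divides $\den(P_G)$, is at most $2$. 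When $G$ is bipartite, \autoref{lem: stanley cf index} instead forces $\den(w) = 1$ for every $w \in \vertx(Q_G)$ --- so $w$ is a perfect matching with $w(e) = 1$, whence $w/w(e) = w \in \Z^E$ --- and thus $\den(P_G) = 1$, so $M_G$ is a polynomial.

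\textbf{The main obstacle} is the first step: the hypothesis a priori constrains only the discrete set $2Q_G \cap \Z^E$, and what makes it propagate to all of $Q_G$ (and then to $C_G$) is precisely Stanley's bound $\den(Q_G) \le 2$, which makes $2Q_G$ a lattice polytope whose vertices lie among the labelings the hypothesis controls; everything afterward is convexity and the standard facial structure of a cone cut by one halfspace. One should also note the degenerate case $Q_G = \emptyset$ (equivalently $C_G = \{\vec 0\}$, equivalently $G$ has no nonzero magic labeling): there $P_G = \{\vec 0\}$ and $M_G \equiv 1$, so the conclusions hold trivially, and in fact the argument above handles it uniformly since then $\vertx(P_G) = \{\vec 0\}$.
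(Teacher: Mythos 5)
Your proof is correct, and it takes a genuinely different route from the paper's. The paper works entirely in the semigroup $\Phi(P_G)$: it takes any magic labeling $L$ of index $>2$, uses \autoref{lem: stanley cf index} to write $L = \sum_i L_i$ with each $L_i$ of index $\le 2$, observes that since $e$ attains the maximum in each $L_i$ the quantity $\max$ is \emph{additive} across this decomposition (i.e.\ $\max(L) = \sum_i \max(L_i)$), and concludes that $(L,\max L) = \sum_i (L_i, \max L_i)$ is not completely fundamental; hence every completely fundamental element of $\Phi(P_G)$ has index $\le 2$ (or $\le 1$ in the bipartite case), and \autoref{cor: quasiperiod ub} finishes. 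Your argument instead makes the polytope geometry explicit: you globalize the hypothesis to a linear inequality valid on all of $C_G$, deduce that $P_G$ is literally the truncation $C_G \cap \{x(e)\le 1\}$ --- a pyramid with apex $\vec 0$ over the transverse slice $B = C_G \cap \{x(e)=1\}$ --- and then read off $\vertx(P_G) = \{\vec 0\} \cup \{w/w(e) : w \in \vertx(Q_G)\}$ and bound denominators coordinatewise. The paper's route is shorter and stays in the language of completely fundamental elements that the paper has already set up; yours is more structural and yields the stronger geometric statement that $P_G$ is a pyramid over a slice of $Q_G$'s cone, which explains \emph{why} its denominator inherits Stanley's bound. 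One small point worth noting: your first step (deducing that $e$ attains the maximum in \emph{every} magic labeling from the index-$2$ hypothesis, via $2w$ for $w \in \vertx(Q_G)$) reproves in a cleaner, vertex-based way exactly what the paper handles informally in the remark following the proposition.
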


\begin{proof}
    Note that the statement holds trivially if the only magic labeling
    of~$G$ is the zero labeling, so we can suppose that $G$ admits a
    nonzero magic labeling.  Fix a magic labeling $L$ of $G$ of index
    greater than $2$.  By \autoref{lem: stanley cf index}, we can
    decompose $L$ as a sum of at least two magic labelings $L_i$ of
    $G$, each of index at most $2$.  Since $e$ attains the maximum
    label in each $L_i$, it must also attain the maximum label in $L$.
    We then have $\max(L) = \sum_{i} \max(L_i)$, and hence
    $(L,\max(L)) = \sum_{i} (L_i,\max(L_i))$ in $\Phi(P_G)$.  So we
    can conclude that $(L,\max(L))$ is not completely fundamental in
    $\Phi(P_G)$.  Therefore, any completely fundamental element of
    $\Phi(P_G)$ has index (and hence maximum label) at most $2$.  It
    follows from \autoref{cor: quasiperiod ub} that $2$ is a
    quasiperiod of $M_G(k)$.

    If $G$ is bipartite, we can furthermore decompose $L$ into a sum
    of magic labelings $L_i$ of index $1$ by \autoref{lem: stanley cf
    index}.  The same approach then applies to show that any
    completely fundamental labeling of $\Phi(P_G)$ has index $1$.
    Thus, \autoref{cor: quasiperiod ub} implies that $1$ is a
    quasiperiod of $M_G(k)$, i.e., $M_G(k)$ is a polynomial.
\end{proof}

\begin{cor}
    Let $G$ be a graph with a leaf.  Then $M_G(k)$ is a
    quasipolynomial of quasiperiod at most $2$.
\end{cor}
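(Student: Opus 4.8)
The plan is to verify that a graph with a leaf satisfies the hypothesis of \autoref{prop: common edge upper bound} and then invoke that proposition directly. Let $v$ be a leaf of $G$, and let $e$ be the unique edge of $G$ incident to $v$. The first step is to observe that in any magic labeling $L$ of $G$ the quantity $s_L(v) = \sum_{e' \ni v} L(e')$ collapses to the single term $L(e)$, simply because $e$ is the only edge containing $v$ (this is true whether or not $e$ happens to be a loop, since in either case $e$ is the sole edge at $v$).

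Next I would use this to show that $e$ attains the maximum label in every index-$2$ magic labeling. If $L$ is a magic labeling of $G$ with index $2$, then $s_L(w) = 2$ for every vertex $w$, and since all edge labels are nonnegative, this forces $L(e') \le 2$ for every edge $e'$; hence $\max(L) \le 2$. On the other hand, $s_L(v) = L(e) = 2$ by the first step, so $\max(L) = 2 = L(e)$, i.e.\ $e$ attains the maximum label in $L$. In the degenerate case that $G$ admits no index-$2$ magic labeling at all, this conclusion holds vacuously, so the hypothesis of \autoref{prop: common edge upper bound} is satisfied regardless.

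Finally, applying \autoref{prop: common edge upper bound} to this edge $e$ immediately yields that $M_G(k)$ is a quasipolynomial of quasiperiod at most $2$.

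I do not expect any real obstacle here: the only points requiring care are the (trivial) reduction of $s_L(v)$ to a single term and the elementary observation that an index-$2$ labeling has all labels at most $2$, so that the leaf edge simultaneously carries the maximum. All of the substantive content — decomposing higher-index labelings into index-$\le 2$ pieces via \autoref{lem: stanley cf index}, controlling the completely fundamental elements of $\Phi(P_G)$, and translating this into a quasiperiod bound through \autoref{cor: quasiperiod ub} — is already packaged inside \autoref{prop: common edge upper bound}, so this corollary is purely a matter of exhibiting the required edge.
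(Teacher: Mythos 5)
Your proof is correct and takes essentially the same approach as the paper's: identify the unique edge $e$ at the leaf, observe that $s_L(v) = L(e)$ forces $L(e)$ to equal the index, note that no edge label can exceed the index, and conclude that $e$ attains the maximum so that \autoref{prop: common edge upper bound} applies. The paper phrases this once for arbitrary magic labelings while you specialize to index $2$, but the reasoning is identical.
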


\begin{proof}
    The edge incident to the leaf vertex satisfies the hypotheses of
    \autoref{prop: common edge upper bound}, as its label in any magic
    labeling must always equal the index and hence is maximal.
\end{proof}

\begin{rem}
    While the hypothesis of \autoref{prop: common edge upper bound}
    only applies to magic labelings of index $2$, it is in fact
    equivalent to assert that the hypothesis holds for all magic
    labelings.  This follows from \autoref{lem: stanley cf index}, as
    any magic labeling can be decomposed as a sum of magic labelings
    of index at most $2$.  Moreover, any magic labeling of index $1$
    can be doubled to yield a magic labeling of index~$2$.
\end{rem}

\begin{rem}
    Let $G$ be a graph with an even number of vertices.  The
    \defing{matching preclusion number} $\mprec(G)$ of $G$ is the
    minimum cardinality of an edge set $S$ such that $G - S$ has no
    perfect matching.  If $G$ is bipartite, then the condition of
    \autoref{prop: common edge upper bound} is equivalent to the
    condition that $\mprec(G) \leq 1$.
\end{rem}

\begin{exmp}
    We can give a method for constructing a bipartite graph with
    matching preclusion number $1$.  For $i \in \{1,2\}$, let $G_i$ be
    a bipartite graph with a vertex $v_i$ such that $G_i \setminus
    \{v_i\}$ has a perfect matching.  Note that such a graph~$G_i$
    must have odd size.  Then consider the graph formed by connecting
    $G_1$ and $G_2$ by a single edge between $v_1$ and $v_2$.  The
    resulting graph has a perfect matching, since we can take the
    perfect matchings in $G_i \setminus \{v_i\}$ for $i \in \{1,2\}$
    along with the added edge.  However, the graph obtained by
    removing the edge between~$v_1$ and $v_2$ has no perfect
    matchings, since it consists of two components of odd size.
\end{exmp}

We have shown in this section that the quasiperiod of the
quasipolynomial $M_G(k)$ is small for certain families of graphs,
while we proved in \autoref{subsec: minimum quasiperiod} that the
quasiperiod is unbounded in general.  This leaves much room for future
progress in understanding how the quasipolynomial $M_G(k)$ behaves for
other types of graphs.

\addtocontents{toc}{\protect\setcounter{tocdepth}{0}}
\section*{Acknowledgements}
This work was completed in part at the 2022 Graduate Research Workshop
in Combinatorics, which was supported in part by NSF grant
$\#$1953985, and a generous award from the Combinatorics Foundation.
The second author was supported by the NSF GRFP and the Jack Kent
Cooke Foundation.  The authors thank Matt Beck and Maryam Farahmand
for many helpful discussions.  We also thank Jeremy Martin for early
contributions to the project.

\bibliography{refs} 

\addtocontents{toc}{\protect\setcounter{tocdepth}{2}}

 \bibliographystyle{amsplain}

\end{document}